\spnewtheorem{algorithm}{Algorithm}{\bfseries}{}
\def\R			{\mathbb R}
\def\Sphere		{\mathbb{S}_1}
\def\Ball		{\mathbb{B}_2}
\def\I		    {\mathbb{I}}
\def\M	        {\mathcal M}
\def\Lebesgue	{\mathrm L}
\def\Radon		{\mathcal R}
\def\NRCDT		{\mathcal N}
\def\maxNRCDT	{\NRCDT_{\mathrm{m}}}
\def\GL			{\mathrm{GL}}
\def\d			{\mathop{}\!\mathrm{d}}
\def\bfx		{\mathbf{x}}
\def\bfy		{\mathbf{y}}
\def\bfzero		{\mathbf{0}}
\def\bfA		{\mathbf{A}}
\def\bfI		{\mathbf{I}}
\def\bftheta	{{\boldsymbol{\theta}}}
\def\P			{\mathcal P}
\def\Glue		{\mathcal I}
\def\F			{\mathbb F}
\def\G			{\mathbb G}
\DeclareMathOperator{\diam}{diam}
\DeclareMathOperator{\supp}{supp}
\DeclareMathOperator{\mean}{mean}
\DeclareMathOperator{\std}{std}
\DeclareMathOperator*{\argmin}{arg\,min}
\newcommand{\subsubset}{\subset\joinrel\subset}
\DeclareFontFamily{U}{mathx}{\hyphenchar\font45}
\DeclareFontShape{U}{mathx}{m}{n}{<-> mathx10}{}
\DeclareSymbolFont{mathx}{U}{mathx}{m}{n}
\DeclareMathAccent{\widebar}{0}{mathx}{"73}
\def\mNRCDT			{\textsubscript{m}NR-CDT}
\begin{document}
\title{Max-Normalized Radon Cumulative Distribution Transform 
for Limited Data Classification}
\titlerunning{Max-Normalized Radon-CDT}

\author{%
Matthias Beckmann\inst{1}
\and
Robert Beinert\inst{2}
\and
Jonas Bresch\inst{2}
}
\authorrunning{M.~Beckmann et al.}

\institute{Center for Industrial Mathematics, University of Bremen,\\
Bibliothekstra{\ss}e 5, 28359 Bremen, Germany\\
\email{research@mbeckmann.de}
\and
Institut f\"ur Mathematik, Technische Universit\"at Berlin,\\
Stra{\ss}e des 17.\ Juni 136, 10623 Berlin, Germany\\
\email{\{beinert,bresch\}@math.tu-berlin.de}
}

\maketitle 
\begin{abstract}
The Radon cumulative distribution transform (R-CDT) exploits 
one-dimensional Wasserstein transport
and the Radon transform
to represent prominent features in images.
It is closely related to the sliced Wasserstein distance
and facilitates classification tasks,
especially in the small data regime, 
like the recognition of watermarks in filigranology.
Here, a typical issue is 
that the given data may be subject to affine transformations
caused by the measuring process.
The aim of this paper is to make the R-CDT 
and the related sliced Wasserstein distance
invariant under affine transformations.
For this,
we propose a two-step normalization of the R-CDT
and prove that our novel transform allows 
linear separation of affinely transformed image classes.
The theoretical results are supported by numerical experiments
showing a significant increase of the classification accuracy
compared to the original R-CDT.

\keywords{%
    Radon-CDT \and
    sliced Wasserstein distance \and
    feature representation \and
    image classification \and
    pattern recognition \and
    small data regime}.
\end{abstract}

\section{Introduction}

Automated pattern recognition and classification
play a central role in numerous applications and disciplines,
be it in medical imaging, biometrics, or document analysis.
Nowadays,
in the big data regime,
end-to-end deep neural networks provide the latest state of the art. 
In the small data regime,
however,
hand-crafted feature extractors and classifiers still stand their ground.
Ideally, 
the feature extractor is designed to transform different classes
to linear separable subsets.
This may,
for instance,
be achieved by 
the so-called Radon cumulative distribution transform (R-CDT)
introduced in~\cite{Kolouri2016},
which is based on one-dimensional optimal transport maps
that are generalized to two-dimensional data
by applying the Radon transform,
known from tomography~\cite{Ramm1996,Natterer2001}.
This approach shows great potential in many applications
\cite{Kolouri2017,DiazMartin2024,Shifat-E-Rabbi2021}
and is closely related to the sliced Wasserstein distance \cite{Bonneel2015,Shifat-E-Rabbi2023}.
A similar approach for data on the sphere is studied in \cite{Quellmalz2023,Quellmalz2024},
for multi-dimensional optimal transport maps in \cite{Moosmueller2023},
and for optimal Gromov--Wasserstein transport maps in \cite{Beier2022}.

A central inspiration for this paper is the 
application of pattern recognition techniques in filigranology%
---the study of watermarks.
These play a central role in dating historical manuscripts
as well as identifying scribes and papermills.
For automatic classification,
the main issue is the enormous number of classes
with only few members per class,
see WZIS%
\footnote{Wasserzeichen-Informationssystem: \url{www.wasserzeichen-online.de}.}.
An end-to-end processing pipeline for thermograms of watermarks 
including an R-CDT-based classification is proposed
in \cite{Hauser2024}, where the authors report classification invariance
with respect to translation and dilation of the watermark.
Other affine transformations caused, e.g., by unstandardized recording methods are,
however,
not included yet.

\paragraph{Contribution.}

The aim of this paper is to incorporate
invariance with respect to
affine transformations into the R-CDT,
this is,
to make the sliced Wasserstein distance
unaware of these transforms.
In difference to \cite{Shifat-E-Rabbi2021},
where the dataset is augmented to encode invariances,
we propose a two-step normalization of the R-CDT
for probability measures on $\R^2$.
To this end,
we first generalize the classical Radon transform to measures
in §~\ref{sec:radon} and, thereon, introduce
the novel \emph{max-normalized R-CDT} (\mNRCDT) in §~\ref{sec:ot-dist}.
Our main theoretical contribution is Theorem~\ref{thm:sep-max-nrcdt}
ensuring the linear separability of affinely transformed measure classes by \mNRCDT.
The theoretical findings are supported 
by proof-of-concept experiments 
in §~\ref{sec:num-ex}
showing a significant improvement of the classification accuracy 
by the proposed normalization,
especially in the small data regime.

\section{Radon Transform}
\label{sec:radon}

The main idea behind the classical Radon transform \cite{Natterer2001} is
to integrate a given bivariate function along all parallel lines
pointing in a certain direction.
This integral transform can also be interpreted
as projection of the given function
onto the line with orthogonal orientation.
In the following,
we briefly review the classical Radon transform for functions
and generalize the concept to measures.
Finally,
we study the effect of affine transformations 
on the Radon transform,
which is crucial
to solve the classification task at hand.

\subsection{Radon Transform of Functions}

Depending on $\bftheta \in \Sphere \coloneqq \{\bfx \in \R^2 \mid \lVert \bfx \rVert = 1\}$,
we introduce the \textit{slicing operator} $S_\bftheta\colon \R^2 \to \R$ by
\begin{equation*}
    S_\bftheta(\bfx) \coloneqq \langle \bfx, \bftheta \rangle,
    \quad
    \bfx \in \R^2.
\end{equation*}
Its preimages $S_\bftheta^{-1}(t)$, 
$t \in \R$,
are the lines $\ell_{t,\bftheta}$ 
in direction 
$\bftheta^\perp \coloneqq (\theta_2, - \theta_1)^\top \in \Sphere$ 
with distance $t$ to the origin.
More precisely,
we have 
\begin{equation*}
    \ell_{t,\bftheta} 
    \coloneqq 
    S_\bftheta^{-1}(t) 
    = 
    \{ t \bftheta + \tau \bftheta^\perp 
        \mid 
        \tau \in \R\}
    \subset \R^2.
\end{equation*}
Using the bijection $\varphi_\bftheta \colon \R^2 \to \R^2$
defined as
$\varphi_\bftheta(t, \tau) 
\coloneqq 
t \, \bftheta + \tau \, \bftheta^\perp$,
whose inverse is given by 
$\varphi_\bftheta(\bfx)^{-1} 
= (\langle \bfx, \bftheta \rangle, \langle \bfx, \bftheta^\perp \rangle)$,
we parameterize $\ell_{t, \bftheta}$ via
$\tau \mapsto \varphi_\bftheta(t, \tau)$.

For $f \in L^1(\R^2)$, 
we define its \textit{Radon transform} $\Radon [f] \colon \R \times \Sphere \to \R$ as the line integral 
\begin{equation*}
    \Radon [f] (t, \bftheta) 
    \coloneqq 
    \int_{\ell_{t,\bftheta}} 
    f(s) 
    \d s,
    \quad 
    (t,\bftheta) \in \R \times \Sphere,
\end{equation*}
where $\d s$ denotes the arc length element of $\ell_{t,\bftheta}$.
This defines the \textit{Radon operator} $\Radon \colon L^1(\R^2) \to L^1(\R \times \Sphere)$.
For fixed $\bftheta \in \Sphere$, 
we set $\Radon_\bftheta \coloneqq \Radon(\cdot, \bftheta)$, 
which is referred to as the \textit{restricted Radon operator} 
$\Radon_\bftheta \colon L^1(\R^2) \to L^1(\R)$.
The action of the Radon operator is illustrated in Figure~\ref{fig:radon}.
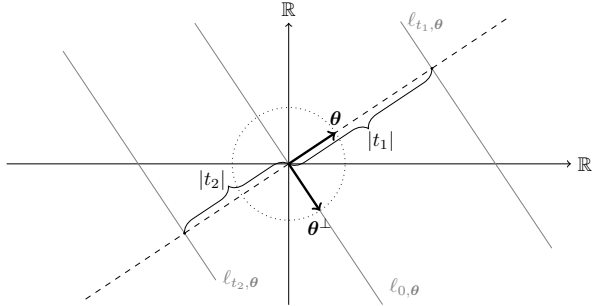
\begin{SCfigure}[2][t]
    \centering
    \scalebox{0.75}{
    \begin{tikzpicture}
        \draw[->] (-5,0) -- (5,0) node[right] {$\R$};
        \draw[->] (0,-2.5) -- (0,2.5) node[above] {$\R$};
        \draw[thin, dashed] (-3*1.2,-2*1.2) -- (3*1.3,2*1.3);  
        \draw[gray] (3,1) -- (3.0507*0.832, 3.0507*0.5547);
        \draw[gray] (3,1) -- (3 + 3*0.5547,1 - 3*0.832);
        \draw[gray] (3.0507*0.832, 3.0507*0.5547) -- (3.0507*0.832 - 1*0.5547, 3.0507*0.5547 + 1*0.832);
        \node at (3.0507*0.832 - 1*0.5547, 3.0507*0.5547 + 1*0.832) [right] {\color{gray}$\ell_{t_1,\bftheta}$};
        \draw[gray] (-4,2) -- (-2.2186*0.832, -2.2186*0.5547);
        \draw[gray] (-2.2186*0.832, -2.2186*0.5547) -- (-2.2186*0.832 + 1*0.5547, -2.2186*0.5547 - 1*0.832);
        \node at (-2.2186*0.832 + 1*0.5547, -2.2186*0.5547 - 1*0.832) [right] {\color{gray}$\ell_{t_2,\bftheta}$};
        \draw[gray] (-0.5547*3,0.832*3) -- (0.5547*3,-0.832*3);
        \node at (0.5547*3,-0.832*3) [above right] {\color{gray}$\ell_{0,\bftheta}$};
        \draw[very thick,->] (0,0) -- (0.5547,-0.832) node[below] {$\bftheta^\perp$};
        \draw[very thick,->] (0,0) -- (0.832,0.5547) node[above] {$\bftheta$};
        \draw[thin, dotted] (0,0) circle (1);
        \node at (3.0507*0.832, 3.0507*0.5547)[circle,fill,inner sep=0.5pt]{};
        \node at (-2.2186*0.832, -2.2186*0.5547)[circle,fill,inner sep=0.5pt]{};
        \draw[decorate,decoration={brace,amplitude=7.5pt,mirror,raise=0pt}] (0,0) -- (3.0507*0.832, 3.0507*0.5547) node[midway,below right,yshift=-0.15cm] {$|t_1|$};
        \draw[decorate,decoration={brace,amplitude=7.5pt,mirror,raise=0pt}] (0,0) -- (-2.2186*0.832, -2.2186*0.5547) node[midway,above left,yshift=1pt,xshift=-2pt] {$|t_2|$};
        \node at (4,0) [anchor=north] {};
        \node at (0,3) [anchor=east] {};
    \end{tikzpicture}
    }
    \caption{Illustration of the bivariate Radon transform
    with distance $t \in \R$
    and normal direction $\bftheta \in \Sphere$.
    The given function is integrated along the lines $\ell_{t, \bftheta}$.
    }
    \label{fig:radon}
\end{SCfigure}
The Radon transform is also well-defined 
for all $f \in L^p(\R^2)$ with $p \geq 1$ 
and $\supp(f) \subseteq \Ball \coloneqq \{ \bfx \in \R^2 \mid \lVert \bfx \rVert \le 1\}$, 
in which case $\Radon [f] \in L^p(\R \times \Sphere)$
with $\supp(\Radon [f]) \subseteq \I \times \Sphere$,
where $\I \coloneqq [-1,1]$.

According to~\cite{Natterer2001}, 
the adjoint 
$\Radon^* \colon L^\infty(\R \times \Sphere) \to L^\infty(\R^2)$ 
of the Radon operator 
$\Radon \colon L^1(\R^2) \to L^1(\R \times \Sphere)$ 
is given by the \emph{back projection}
\begin{equation*}
    \Radon^* [g](\bfx) \coloneqq \int_{\Sphere} g(S_\bftheta(\bfx), \bftheta) \d \sigma_{\Sphere}(\bftheta),
    \quad \bfx \in \R^2,
\end{equation*}
where $\sigma_{\Sphere}$ denotes the surface measure on $\Sphere$.

\subsection{Radon Transform of Measures}

The concept of the Radon transform is now translated to signed, 
regular, finite measures $\mu \in \M(\R^2)$.
For a fixed direction $\bftheta \in \Sphere$, 
we generalize the \textit{restricted Radon transform} $\Radon_\bftheta$ to measures by setting
\begin{equation*}
    \Radon_\bftheta \colon \M(\R^2) \to \M(\R), \quad
    \mu \mapsto (S_\bftheta)_\# \mu = \mu \circ S_\bftheta^{-1},
\end{equation*}
which corresponds to the integration along $\ell_{t,\bftheta}$.
Note that
$
    \Radon_\bftheta[\mu](\R) = \mu(\R^2)
$
for all  $\bftheta \in \Sphere$ and,
thus, 
the mass of $\mu$ is preserved by $\Radon_\bftheta$.
In measure theory, $\Radon_\bftheta$ can be considered as a disintegration family.
Heuristically, 
we may generalize the Radon transform 
by integrating $\Radon_\bftheta$ along $\bftheta \in \Sphere$.
Therefore, we define the \textit{Radon transform} $\Radon \colon \M(\R^2) \to \M(\R \times \Sphere)$ via
\begin{equation}
    \label{eq:radon-meas}
    \Radon [\mu] \coloneqq \Glue_\#[\mu \times u_{\Sphere}]
\end{equation}
with
$
    \Glue(\bfx, \bftheta) \coloneqq (S_\bftheta(\bfx), \bftheta)
$
for $(\bfx, \bftheta) \in \R^2 \times \Sphere$.
Here $\mu \times u_{\Sphere}$ denotes the product measure
between the given $\mu$ 
and the uniform measure 
$u_{\Sphere} \coloneqq \sigma_{\Sphere}/2\pi$
on $\Sphere$.

\begin{proposition}
    Let $\mu \in \M(\R^2)$.
    Then, 
    $\Radon [\mu]$ can be disintegrated 
    into the family $\Radon_\bftheta [\mu]$ 
    with respect to $u_{\Sphere}$,
    i.e., 
    for all continuous $g \in C_0(\R \times \Sphere)$ vanishing at infinity, 
    we have
    \begin{equation*}
        \langle \Radon [\mu], g\rangle 
        = 
        \int_{\Sphere} \langle \Radon_{\bftheta} [\mu], g(\cdot, \bftheta)\rangle \d u_{\Sphere}(\bftheta).
    \end{equation*}
\end{proposition}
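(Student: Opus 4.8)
The plan is to unwind the definition $\Radon[\mu] = \Glue_\#[\mu \times u_\Sphere]$ by means of the change-of-variables formula for pushforward measures and then to separate the two integrations with Fubini's theorem. Throughout I would work with the Jordan decomposition $\mu = \mu^+ - \mu^-$ (equivalently, estimate against the total variation $\lvert\mu\rvert$) so that all appearing measures are finite and positive; since $g \in C_0(\R \times \Sphere)$ is bounded and $u_\Sphere$ is a probability measure, every integrand below is integrable and Fubini applies without further ado.

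First I would rewrite the left-hand pairing. By definition of the pushforward, $\langle \Glue_\#[\mu \times u_\Sphere], g\rangle = \int_{\R^2 \times \Sphere} g(\Glue(\bfx,\bftheta))\, \d(\mu\times u_\Sphere)(\bfx,\bftheta)$, and inserting $\Glue(\bfx,\bftheta) = (S_\bftheta(\bfx),\bftheta)$ turns this into $\int_{\R^2\times\Sphere} g(S_\bftheta(\bfx),\bftheta)\, \d(\mu\times u_\Sphere)(\bfx,\bftheta)$. The map $(\bfx,\bftheta)\mapsto g(S_\bftheta(\bfx),\bftheta)$ is jointly continuous, hence jointly measurable, and bounded by $\lVert g\rVert_\infty$, so Tonelli/Fubini lets me peel off the $\bftheta$-integration and write the expression as $\int_\Sphere \bigl(\int_{\R^2} g(S_\bftheta(\bfx),\bftheta)\, \d\mu(\bfx)\bigr)\, \d u_\Sphere(\bftheta)$.

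It remains to identify the inner integral with $\langle \Radon_\bftheta[\mu], g(\cdot,\bftheta)\rangle$. For fixed $\bftheta$ the integrand is $(g(\cdot,\bftheta)\circ S_\bftheta)(\bfx)$, so a second application of the pushforward change-of-variables formula---now for $(S_\bftheta)_\#\mu = \Radon_\bftheta[\mu]$---yields $\int_{\R^2} g(S_\bftheta(\bfx),\bftheta)\, \d\mu(\bfx) = \int_\R g(t,\bftheta)\, \d(\Radon_\bftheta[\mu])(t) = \langle \Radon_\bftheta[\mu], g(\cdot,\bftheta)\rangle$. Substituting this back completes the chain of equalities.

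I expect the only genuine care to be in the measure-theoretic bookkeeping rather than in any computation: one must check that $\bftheta \mapsto \langle \Radon_\bftheta[\mu], g(\cdot,\bftheta)\rangle$ is measurable (which follows from the joint continuity of the integrand together with Fubini) and that the product measure $\mu \times u_\Sphere$ as well as both pushforwards are well defined for signed $\mu$, for which the reduction to $\mu^\pm$ suffices. These points are standard but worth stating explicitly, since the disintegration claim is precisely the assertion that this Fubini factorization holds for every test function $g$.
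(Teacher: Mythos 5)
Your proposal is correct and follows essentially the same route as the paper's proof: unwinding the pushforward definition of $\Radon[\mu] = \Glue_\#[\mu \times u_{\Sphere}]$, applying Fubini to separate the $\bftheta$-integration, and identifying the inner integral via the change-of-variables formula for $(S_\bftheta)_\#\mu$. Your additional bookkeeping---the Jordan decomposition for signed $\mu$ and the explicit measurability check for $\bftheta \mapsto \langle \Radon_\bftheta[\mu], g(\cdot,\bftheta)\rangle$---is a welcome refinement that the paper leaves implicit, but it does not change the argument.
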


\begin{proof}
    By definition in \eqref{eq:radon-meas}, 
    we obtain 
    \begin{align*}
        \langle \Radon [\mu], g\rangle
        & = 
        \int_{\R \times \Sphere} g(t, \bftheta) \d \Glue_\#[\mu \times u_{\Sphere}] (t, \bftheta) 
        = 
        \int_{\Sphere} \int_{\R^2} g(S_\bftheta(\bfx), \bftheta) \d \mu(\bfx) \d u_{\Sphere}(\bftheta) 
        \\
        & = 
        \int_{\Sphere} \int_{\R^2} g(t, \bftheta) \d [(S_\bftheta)_{\#}\mu](t) \d u_{\Sphere}(\bftheta) 
        = 
        \int_{\Sphere} \langle \Radon_{\bftheta} [\mu], g(\cdot, \bftheta)\rangle \d u_{\Sphere}(\bftheta)
    \end{align*}
    using Fubini's theorem. \qed
\end{proof}

One can find the measure-valued Radon transform 
$\Radon \colon \M(\R^2) \to \M(\R \times \Sphere)$ 
as the adjoint of the function-valued adjoint 
$\Radon^* \colon L^\infty (\R \times \Sphere) \to L^\infty(\R^2)$,
similar to the case of distributions with compact support,
cf.~\cite{Ramm1996}.

\begin{proposition} \label{prop:adjRT_measure}
    The Radon transform of $\mu \in \M(\R^2)$ satisfies
    \begin{equation*}
        \langle \Radon [\mu], g\rangle = \langle \mu, \Radon^* [g]\rangle
        \quad \forall \,
        g \in L^\infty(\R \times \Sphere).
    \end{equation*}
\end{proposition}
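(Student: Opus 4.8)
The plan is a direct computation by Fubini's theorem, using the same mechanism as in the preceding proposition but now pairing against the $\bftheta$-variable rather than the $\bfx$-variable. First I would unfold the left-hand side via the defining pushforward relation \eqref{eq:radon-meas} and the change-of-variables formula for pushforward measures, $\int h \, \d (T_\# \nu) = \int h \circ T \, \d\nu$, applied to $T = \Glue$:
\[
    \langle \Radon [\mu], g\rangle
    = \int_{\R^2 \times \Sphere} g(\Glue(\bfx, \bftheta)) \, \d[\mu \times u_{\Sphere}](\bfx, \bftheta)
    = \int_{\R^2 \times \Sphere} g(S_\bftheta(\bfx), \bftheta) \, \d[\mu \times u_{\Sphere}](\bfx, \bftheta).
\]
The core step is then to interchange the order of integration. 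The integrand $(\bfx, \bftheta) \mapsto g(S_\bftheta(\bfx), \bftheta)$ is jointly measurable, being the composition of the measurable map $\Glue$ with a representative of $g$, and it is bounded by $\lVert g \rVert_{L^\infty}$. Decomposing $\mu = \mu^+ - \mu^-$ into its Jordan parts makes $\lvert \mu \rvert \times u_{\Sphere}$ a finite nonnegative product measure, so Fubini's theorem applies on each part and I may swap the integrals to obtain
\[
    \langle \Radon [\mu], g\rangle
    = \int_{\R^2} \Bigl( \int_{\Sphere} g(S_\bftheta(\bfx), \bftheta) \, \d u_{\Sphere}(\bftheta) \Bigr) \d\mu(\bfx).
\]

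Finally I would identify the inner $\Sphere$-integral with the back projection $\Radon^*[g]$, so that the outer integral becomes the pairing $\langle \mu, \Radon^*[g]\rangle$, which is the claim. Note that $\Radon^*[g]$ is bounded and measurable, hence genuinely pairs against the finite measure $\mu$, so the right-hand side is well defined. Compared with the previous proposition, the only genuinely new content is the passage from continuous test functions $g \in C_0(\R \times \Sphere)$ to bounded measurable ones $g \in L^\infty(\R \times \Sphere)$; the boundedness of $g$ together with the finiteness of $\mu \times u_{\Sphere}$ is exactly what secures this extension.

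The main obstacle is not the Fubini swap itself but the two rigour points surrounding it. The first is that $\Radon^*[g]$ must be a genuine, everywhere-defined bounded measurable function even though $g$ is only an $L^\infty$-equivalence class: one fixes a measurable representative of $g$, checks that $\bfx \mapsto \int_{\Sphere} g(S_\bftheta(\bfx),\bftheta)\,\d u_{\Sphere}(\bftheta)$ is measurable, and verifies that changing the representative alters the result only on a $\mu$-null set. The second, more delicate point is the consistent normalization between the uniform measure $u_{\Sphere}$ that defines $\Radon[\mu]$ and the surface measure $\sigma_{\Sphere} = 2\pi\, u_{\Sphere}$ appearing in the definition of $\Radon^*$; this constant must be tracked carefully so that the inner integral produced by Fubini is reconciled with $\Radon^*[g]$ and the adjoint identity holds with the correct factor.
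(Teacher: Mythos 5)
Your proof is correct and follows essentially the same route as the paper's: unfold $\Radon[\mu]$ via the pushforward definition \eqref{eq:radon-meas}, change variables, apply Fubini to the finite product measure, and recognize the inner $\Sphere$-integral as $\Radon^*[g]$. Your rigour remarks merely elaborate the paper's two-line computation; in particular, the $2\pi$ normalization mismatch between $u_{\Sphere}$ in \eqref{eq:radon-meas} and $\sigma_{\Sphere}$ in the stated definition of $\Radon^*$ is a fair point that the paper's proof silently elides (the identity holds exactly only with the back projection taken with respect to $u_{\Sphere}$).
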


\begin{proof}
    For all $\mu \in \M(\R^2)$ and $g \in L^\infty(\R \times \Sphere)$, applying Fubini's theorem gives
    \begin{equation*}
        \langle \Radon [\mu], g\rangle = \int_{\R \times \Sphere} g(t, \bftheta)  \d \Glue_\#[\mu \times u_{\Sphere}] (t, \bftheta) = \int_{\R^2} \int_{\Sphere} g(S_\bftheta(\bfx), \bftheta) \d u_{\Sphere}(\bftheta) \d \mu(\bfx).
        \tag*{\qed}
    \end{equation*}
\end{proof}

Note that, for $f \in L^1(\R^2)$ and the Lebesgue measure $\lambda_{\R^2}$ on $\R^2$,
the Radon transform satisfies 
\begin{equation*}
\Radon[f \, \lambda_{\R^2}]  =  \Radon [f] \, \sigma_{\R \times \Sphere},
\end{equation*}
where $\sigma_{\R \times \Sphere}$ denotes the surface measure on $\R \times \Sphere$.
In particular, the Radon transform of an absolutely continuous measure is again absolutely continuous.

\subsection{Radon Transform of Affine Transformations}
\label{sec:Radon_affine_transform}

We now consider the Radon transform of an affinely transformed finite measure $\mu \in \M(\R^2)$.
To this end, let $\bfA \in \GL(2)$ and $\bfy \in \R^2$,
this is, $\bfA$ is contained in the general linear group $\GL$ of regular matrices.
We define $\mu_{\bfA,\bfy} \in \M(\R^2)$ via
\begin{equation}
    \label{eq:aff-meas}
    \mu_{\bfA,\bfy} 
    \coloneqq 
    (\bfA \cdot + \bfy)_{\#} \mu = \mu \circ (\bfA^{-1}(\cdot-\bfy)).
\end{equation}

\begin{proposition} \label{prop:RT_transformation}
    For any $\bftheta \in \Sphere$, the restricted Radon transform satisfies
    \begin{equation*}
        \Radon_{\bftheta} [\mu_{\bfA,\bfy}] 
        = 
        (\lVert \bfA^\top \bftheta\rVert \cdot + \langle \bfy, \bftheta\rangle)_{\#} 
        \Radon_{\frac{\bfA^\top \bftheta}{\lVert \bfA^\top \bftheta \rVert}}[\mu]
        = 
        \Radon_{\frac{\bfA^\top \bftheta}{\lVert \bfA^\top \bftheta \rVert}}[\mu] 
        \circ
        \Bigl(\tfrac{\cdot - \langle \bfy, \bftheta\rangle}{\lVert \bfA^\top \bftheta \rVert}\Bigr).
    \end{equation*}
\end{proposition}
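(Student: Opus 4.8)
The plan is to reduce the statement to the functoriality of the pushforward together with the elementary identity $\langle \bfA\bfx, \bftheta\rangle = \langle \bfx, \bfA^\top\bftheta\rangle$. Starting from the definition of the restricted Radon transform as a pushforward, the left-hand side reads $\Radon_\bftheta[\mu_{\bfA,\bfy}] = (S_\bftheta)_\#\bigl((\bfA\cdot+\bfy)_\#\mu\bigr)$, and since pushforwards compose, this equals $(S_\bftheta\circ(\bfA\cdot+\bfy))_\#\mu$. Thus everything hinges on rewriting the scalar map $\bfx\mapsto S_\bftheta(\bfA\bfx+\bfy)$.

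First I would compute this composite explicitly via $S_\bftheta(\bfA\bfx+\bfy)=\langle\bfA\bfx,\bftheta\rangle+\langle\bfy,\bftheta\rangle=\langle\bfx,\bfA^\top\bftheta\rangle+\langle\bfy,\bftheta\rangle$. The crucial observation is that $\bfA^\top\bftheta\ne\bfzero$, since $\bfA\in\GL(2)$ and $\bftheta\in\Sphere$, so I may factor out its norm and set $\bfpsi\coloneqq\bfA^\top\bftheta/\lVert\bfA^\top\bftheta\rVert\in\Sphere$. This yields $S_\bftheta(\bfA\bfx+\bfy)=\lVert\bfA^\top\bftheta\rVert\,S_{\bfpsi}(\bfx)+\langle\bfy,\bftheta\rangle$, i.e.\ the factorization $S_\bftheta\circ(\bfA\cdot+\bfy)=(\lVert\bfA^\top\bftheta\rVert\,\cdot+\langle\bfy,\bftheta\rangle)\circ S_{\bfpsi}$ as maps $\R^2\to\R$, where the outer factor is the one-dimensional affine map $s\mapsto\lVert\bfA^\top\bftheta\rVert s+\langle\bfy,\bftheta\rangle$.

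Then I would push this factorization through the pushforward: applying functoriality once more gives $(S_\bftheta\circ(\bfA\cdot+\bfy))_\#\mu=(\lVert\bfA^\top\bftheta\rVert\,\cdot+\langle\bfy,\bftheta\rangle)_\#(S_{\bfpsi})_\#\mu=(\lVert\bfA^\top\bftheta\rVert\,\cdot+\langle\bfy,\bftheta\rangle)_\#\Radon_{\bfpsi}[\mu]$, which is the first claimed equality. For the second equality I would merely unwind the pushforward of a one-dimensional affine map as precomposition with its inverse: the map $a(s)=\lVert\bfA^\top\bftheta\rVert s+\langle\bfy,\bftheta\rangle$ has inverse $a^{-1}(r)=(r-\langle\bfy,\bftheta\rangle)/\lVert\bfA^\top\bftheta\rVert$, so $a_\#\nu=\nu\circ a^{-1}$ reproduces the stated expression verbatim.

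The argument is almost entirely formal; the only point requiring genuine care is the invertibility bookkeeping, namely verifying $\bfA^\top\bftheta\ne\bfzero$ so that the normalization is legitimate and $\bfpsi$ genuinely lands on $\Sphere$, and checking that the one-dimensional affine factor is orientation-consistent, which holds since the scaling $\lVert\bfA^\top\bftheta\rVert>0$ is positive so no sign subtleties enter the definition of the pushforward. I expect no substantive obstacle beyond keeping straight which composition is applied on the inside and which on the outside.
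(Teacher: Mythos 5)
Your proposal is correct and follows essentially the same route as the paper's proof: both factor $S_\bftheta\circ(\bfA\cdot+\bfy)$ via the adjoint identity $\langle\bfA\bfx,\bftheta\rangle=\langle\bfx,\bfA^\top\bftheta\rangle$, normalize $\bfA^\top\bftheta$ to a unit vector, and use functoriality of the pushforward to peel off the one-dimensional affine map. Your explicit checks that $\bfA^\top\bftheta\neq\bfzero$ (so the normalization is legitimate) and that the pushforward under the invertible affine map equals precomposition with its inverse are left implicit in the paper but are exactly the right bookkeeping.
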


\begin{proof}
    Direct calculations yield
    \begin{align*}
        \Radon_{\bftheta} [\mu_{\bfA,\bfy}]
        &= 
        (S_\bftheta)_{\#}[(\bfA \cdot + \bfy)_{\#} \mu] 
        = 
        (\langle \bfA \cdot + \bfy, \bftheta\rangle)_{\#} \mu 
        =
        (\langle \cdot, \bfA^\top \bftheta\rangle + \langle \bfy, \bftheta\rangle)_{\#} \mu 
        \\
        &=
        \bigl(
            \lVert \bfA^\top \bftheta\rVert 
            \bigl\langle \cdot, \tfrac{\bfA^\top \bftheta}{\lVert \bfA^\top \bftheta\rVert}\bigr\rangle 
            +
            \langle \bfy, \bftheta\rangle
        \bigr)_{\#} \mu
        =
        (\lVert \bfA^\top \bftheta\rVert \cdot + \langle \bfy, \bftheta\rangle)_{\#} 
        \Radon_{\frac{\bfA^\top \bftheta}{\lVert \bfA^\top \bftheta \rVert}}[\mu],
    \end{align*}
    and the proof is complete.
    \qed
\end{proof}

\begin{table}[t]
    \caption{Summary of common transformations for $\mu \in \M(\R^2)$ 
        with $a,b > 0$ and $c, \varphi \in \R$.
        The unit circle is parameterized by
        $\bftheta(\vartheta) \coloneqq (\cos(\vartheta), \sin(\vartheta))^\top$.
        The Radon transform for the left half of $\Sphere$ follows by symmetry.}
    \label{tab:radon-aff-trans}
    \begin{minipage}{\linewidth}
    \resizebox{\linewidth}{!}{
        \begin{tabular}{l@{\enspace}l@{\enspace}l@{\enspace}l}
            \toprule
            transformation 
            & 
            $\bfA$
            &
            $\bfy$
            &
            $\Radon_{\bftheta(\vartheta)} [\mu_{\bfA, \bfy}]$,
            $\vartheta \in (-\tfrac{\pi}{2}, \tfrac{\pi}{2})$
            \\
            \midrule
            translation 
            & 
            $\bfI$ 
            &
            $\R^2$ 
            &
            $\Radon_{\bftheta(\vartheta)} [\mu] \circ (\cdot - \langle \bfy, \bftheta(\vartheta) \rangle)$ 
            \\[1ex]
            rotation
            & 
            $\bigl(\begin{smallmatrix} \cos(\varphi) & -\sin(\varphi) \\ \sin(\varphi) & \hphantom{-}\cos(\varphi) \end{smallmatrix}\bigr)$
            &
            $\bfzero$ 
            & 
            $\Radon_{\bftheta(\vartheta - \varphi)} [\mu]$ 
            \\[1ex]
            reflection 
            &
            $\bigl(\begin{smallmatrix} \cos(\varphi) & \hphantom{-}\sin(\varphi) \\ \sin(\varphi) & -\cos(\varphi) \end{smallmatrix}\bigr)$
            &
            $\bfzero$  
            &
            $\Radon_{\bftheta(\varphi - \vartheta)} [\mu]$ 
            \\[1ex]
            anisotropic scaling 
            & 
            $\bigl(\begin{smallmatrix} a & 0 \\ 0 & b \end{smallmatrix}\bigr)$
            &
            $\bfzero$  
            &
            $\Radon_{\bftheta(\arctan(\frac{b}{a} \tan(\vartheta)))}[\mu] \circ ([a^2 \cos^2(\vartheta) + b^2 \sin^2(\vartheta)]^{-1/2} \cdot)$ 
            \\[1ex]
            vertical shear
            & 
            $\bigl(\begin{smallmatrix} 1 & 0 \\ c & 1 \end{smallmatrix}\bigr)$
            &
            $\bfzero$ 
            & 
            $\Radon_{\bftheta(\arctan(c + \tan(\vartheta)))}[\mu] \circ ([1 + c^2 \cos^2(\vartheta) + c \sin(2\vartheta)]^{-1/2} \cdot)$
            \\
            \bottomrule
        \end{tabular}
    }
    \end{minipage}
\end{table}

The effect of common affine transformations on the Radon transform
is given in Table~\ref{tab:radon-aff-trans}.
In order to describe the deformation with respect to $\bftheta$,
we over-parameterize the unit circle $\Sphere$ via
$\bftheta(\vartheta) \coloneqq (\cos(\vartheta), \sin(\vartheta))^\top$, $\vartheta \in \R$.
As by Proposition~\ref{prop:RT_transformation},
an affine transformation essentially causes a transition and dilation
of the transformed measure 
together with a non-affine remapping in $\bftheta$.

\section{Optimal Transport-Based Transforms}
\label{sec:ot-dist}

The aim of the following is to introduce an image distance
that is unaware of affine transformations.
Methodologically,
we rely on the \emph{Radon cumulative distribution transform} (R-CDT)
introduced in \cite{Kolouri2016},
which allows to utilize the fast-to-compute, one-dimensional Wasserstein distance
in the context of image processing 
due to a Radon-based slicing technique.
As the R-CDT is not invariant under affine transformation by itself,
we propose a two-step normalization scheme,
which is essentially grounded on our observations 
regarding the Radon transform under affine transformations
in §~\ref{sec:Radon_affine_transform}.
Finally,
we study the linear separability
of affinely transformed image classes
by our novel normalized R-CDT.

\subsection{R-CDT for Measures}

The R-CDT traces back to Kolouri et al.\ \cite{Kolouri2016}
and transforms smooth, bivariate density functions.
In difference to \cite{Kolouri2016},
we introduce the concept
for arbitrary probability measures,
similar to~\cite{DiazMartin2024}.
In a first step,
we consider probability measures $\P(\R) \subset \M(\R)$ 
defined on the real line.
For $\mu \in \P(\R)$,
the \emph{cumulative distribution function}
$F_\mu \colon \R \to [0,1]$
is given by $F_\mu(t) \coloneqq \mu((-\infty,t])$, $t \in \R$.
Its generalized inverse,
known as \emph{quantile function},
reads as
\begin{equation*}
    F_\mu^{[-1]}(t) \coloneqq \inf \{s \in \R \mid F_\mu(s) > t\},
    \quad t \in \R.
\end{equation*}
Based on a reference measure $\rho \in \P(\R)$
that does not give mass to atoms,
e.g.,
the uniform distribution
$u_{[0,1]}$ on $[0,1]$,
we define
the {\em cumulative distribution transform}
$\widehat{\mu} \colon \R \to \R$, in short CDT, via
\begin{equation*}
    \widehat{\mu} \coloneqq F_\mu^{[-1]} \circ F_\rho.
\end{equation*}

For any convex cost function $c \colon \R \to [0,\infty)$,
the CDT
(with respect to $\rho$) 
solves the Monge--Kantorovich transportation problem
\cite{Villani2003},
this is,
\begin{equation*}
    \widehat{\mu} = \argmin_{T_\# \rho = \mu} \int_\R c(s-T(s))  \d \rho(s),
\end{equation*}
where the minimum is taken over all measurable functions $T \colon \R \to \R$.
In other words,
$\widehat{\mu} \colon \R \to \R$ is an optimal Monge map
transporting $\rho$ to $\mu$
while minimizing the cost.
If $\mu \in \P_2(\R)$, i.e., $\mu$ has finite 2nd moment,
then $\widehat{\mu}$ is square integrable
with respect to $\rho$,
i.e.,
$\widehat{\mu} \in L^2_\rho(\R)$.
Moreover,
for $\mu, \nu \in \P_2(\R)$,
the norm distance
\begin{equation*}
    \lVert \widehat{\mu} - \widehat{\nu} \rVert_\rho
    \coloneqq
    \Bigl(
        \int_\R
        \lvert \mu(t) - \nu(t) \rvert^2
        \d \rho(t)
    \Bigr)^{\frac{1}{2}}
\end{equation*}
equals the well-established Wasserstein-2 distance 
\cite{Villani2003}.

To deal with a probability measure $\mu \in \P(\R^2)$
defined on the plane,
we first determine the Radon transform $\Radon[\mu] \in \M(\R \times \Sphere)$ 
with its disintegration family $\{\Radon_\bftheta[\mu] \in \P(\R) \mid \bftheta \in \Sphere \}$.
Then,
for each fixed $\bftheta \in \Sphere$, 
we consider the CDT $\widehat{\Radon}_\bftheta [\mu]$
(with respect to the same reference measure $\rho \in \P(\R)$
for all $\bftheta \in \Sphere$)
of the Radon projection $\Radon_\bftheta [\mu]$, 
yielding the \emph{R-CDT}\, 
$\widehat{\Radon}[\mu] \colon \R \times \Sphere \to \R$ of $\mu$ via
\begin{equation*}
    \widehat{\Radon} [\mu](t,\bftheta) 
    \coloneqq \widehat{\Radon}_\bftheta [\mu](t),
    \quad  (t,\bftheta) \in \R \times \Sphere.
\end{equation*}
If $\mu \in \P_2(\R^2)$,
then the Radon projection $\R_\bftheta [\mu] \in \P_2(\R)$
has finite 2nd moment as well.
Consequently,
$\widehat{\Radon}[\mu] \in L_{\rho \times u_{\Sphere}}^2(\R \times \Sphere)$.
For $\mu, \nu \in \P_2(\R^2)$,
the norm distance
\begin{equation*}
    \lVert 
        \widehat{\Radon}[\mu] - \widehat{\Radon}[\nu]
    \rVert_{\rho \times u_{\Sphere}}
    \coloneqq
    \Bigl(
        \int_{\Sphere} \int_\R
        \lvert 
            \widehat{\Radon}[\mu](t, \bftheta)
            -
            \widehat{\Radon}[\nu](t, \bftheta)
        \rvert^2
        \d \rho(t) \d u_{\Sphere}(\bftheta)
    \Bigr)^{\frac{1}{2}}
\end{equation*}
resembles the so-called sliced Wasserstein-2 distance \cite{Bonneel2015}.

\subsection{Normalized R-CDT}

The R-CDT is by itself not invariant under affine transformations,
which emerge in various applications.
More precisely,
the R-CDT inherits the behavior of the Radon transform
observed in §~\ref{sec:Radon_affine_transform}.
Notice that
the translation and dilation of $\Radon_\bftheta[\mu]$
causes a horizontal shift (addition of a constant)
and a scaling (multiplication with a constant)
of $\widehat{\Radon}_\bftheta [\mu]$,
respectively.
In the first normalization step,
we revert this effects by
ensuring zero mean and unit standard deviation
of the R-CDT projection.
More precisely,
we define the {\em normalized R-CDT} (NR-CDT)
$\NRCDT [\mu] \colon \R \times \Sphere \to \R$ 
of $\mu \in \P_2(\R^2)$ via
\begin{equation*}
    \NRCDT [\mu](t,\bftheta) 
    \coloneqq
    \NRCDT_\bftheta[\mu](t)
    \coloneqq
    \frac
    {\widehat{\Radon}_\bftheta[\mu](t) 
    - 
    \mean(\widehat{\Radon}_\bftheta[\mu])}
    {\std(\widehat{\Radon}_\bftheta[\mu])},
    \quad (t,\bftheta) \in \R \times \Sphere,
\end{equation*}
where, for $g \in \Lebesgue^2_\rho(\R)$,
\begin{equation*}
    \mean(g) 
    \coloneqq 
    \int_\R g(s) \d \rho(s),
    \qquad
    \std(g) 
    \coloneqq 
    \Bigr(
        \int_\R 
        \lvert g(s) - \mean(g) \rvert^2 
        \d \rho(s)
    \Bigr)^{\frac{1}{2}}.
\end{equation*}

To ensure
that the NR-CDT is well defined,
we have to guarantee
that the standard deviation of the R-CDT projection does not vanish.
For this,
we restrict ourselves to measures
whose supports are not contained in a straight line.
More precisely,
we consider the class
\begin{equation*}
    \P_c^*(\R^2) 
    \coloneqq 
    \{\mu \in \P(\R^2) \mid \supp(\mu) \subsubset \R^2 \land \dim(\supp(\mu)) > 1\}
    \subset
    \P_2(\R^2).
\end{equation*}
Here,
$\subsubset$ denotes a compact subset, 
and $\dim$ the dimension 
of the affine hull.
For these,
the standard deviation of the restricted Radon transform
is bounded away from zero and
cannot vanish.

\begin{proposition} 
    \label{prop:sigma_bounded}
    Let $\mu \in \P_c^*(\R^2)$.
    Then, there exists a constant $c > 0$ such that
    \begin{equation*}
        \std(\widehat{\Radon}_\bftheta [\mu]) \geq c
        \quad \forall \, \bftheta \in \Sphere.
    \end{equation*}
\end{proposition}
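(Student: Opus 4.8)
The plan is to reduce the standard deviation of the R-CDT projection to the ordinary variance of $\mu$ in direction $\bftheta$, to recognize the latter as a quadratic form in $\bftheta$ governed by the covariance matrix of $\mu$, and then to use the hypothesis $\dim(\supp(\mu)) > 1$ together with the compactness of $\Sphere$ to bound this form below by a positive constant.

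First I would exploit that $\widehat{\Radon}_\bftheta[\mu]$ is, by construction, the optimal Monge map pushing $\rho$ onto $\Radon_\bftheta[\mu]$, so that $(\widehat{\Radon}_\bftheta[\mu])_\# \rho = \Radon_\bftheta[\mu]$. A change of variables then turns every $\rho$-integral of the transport map into an integral against the projection $\Radon_\bftheta[\mu]$; in particular $\mean(\widehat{\Radon}_\bftheta[\mu])$ equals the mean of $\Radon_\bftheta[\mu]$, and
\[
    \std(\widehat{\Radon}_\bftheta[\mu])^2
    = \int_\R \bigl| t - \mean(\widehat{\Radon}_\bftheta[\mu])\bigr|^2 \d\Radon_\bftheta[\mu](t)
    = \var(\Radon_\bftheta[\mu]).
\]
Thus it suffices to bound the variance of the one-dimensional projection from below, uniformly in $\bftheta$.

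Next, since $\Radon_\bftheta[\mu] = (S_\bftheta)_\# \mu$, a second change of variables would express this variance as a quadratic form: with barycenter $\bar\bfx \coloneqq \int_{\R^2} \bfx \d\mu(\bfx)$,
\[
    \var(\Radon_\bftheta[\mu])
    = \int_{\R^2} \langle \bfx - \bar\bfx, \bftheta\rangle^2 \d\mu(\bfx)
    = \bftheta^\top \Sigma\, \bftheta,
    \qquad
    \Sigma \coloneqq \int_{\R^2} (\bfx - \bar\bfx)(\bfx - \bar\bfx)^\top \d\mu(\bfx),
\]
which is finite because $\supp(\mu)$ is compact. The decisive step is to verify that the covariance matrix $\Sigma$ is positive definite: it is symmetric positive semidefinite, and $\bftheta^\top \Sigma \bftheta = 0$ would force $\langle \bfx - \bar\bfx, \bftheta\rangle = 0$ for $\mu$-almost every $\bfx$, placing $\supp(\mu)$ on a line through $\bar\bfx$ and contradicting $\dim(\supp(\mu)) > 1$. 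Finally, $\bftheta \mapsto \bftheta^\top \Sigma \bftheta$ is continuous on the compact set $\Sphere$, so it attains a minimum equal to the smallest eigenvalue $\lambda_{\min}(\Sigma) > 0$, and $c \coloneqq \sqrt{\lambda_{\min}(\Sigma)}$ settles the claim.

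I expect the main obstacle to be the positive-definiteness step, as this is the only place where the structural assumption $\dim(\supp(\mu)) > 1$ is used and where it must be turned into a genuine spectral lower bound; the two change-of-variables identities and the ensuing compactness argument are essentially routine once the quadratic-form representation is available.
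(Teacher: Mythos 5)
Your proof is correct, and it takes a genuinely more explicit route than the paper's. Both arguments begin identically: using that $\widehat{\Radon}_\bftheta[\mu]$ pushes $\rho$ forward to $\Radon_\bftheta[\mu]=(S_\bftheta)_\#\mu$ (valid since the reference $\rho$ is atomless), the standard deviation of the CDT reduces to the directional variance $\int_{\R^2}\lvert\langle\bfx,\bftheta\rangle-\mean(\widehat{\Radon}_\bftheta[\mu])\rvert^2\d\mu(\bfx)$, and both use the hypothesis $\dim(\supp(\mu))>1$ in the same decisive way, namely that a vanishing directional variance would confine $\supp(\mu)$ to a line. The paper then proceeds softly: a separate lemma shows via dominated convergence that $\bftheta\mapsto\std(\widehat{\Radon}_\bftheta[\mu])$ is continuous on $\Sphere$, and the bound follows by contradiction, since an infimum of zero would be attained on the compact sphere and force the line-support degeneracy. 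You instead observe that the directional variance is the quadratic form $\bftheta^\top\Sigma\,\bftheta$ of the covariance matrix $\Sigma$ of $\mu$, so continuity is automatic, the minimum over $\Sphere$ is exactly the smallest eigenvalue, and the dimension hypothesis upgrades to positive definiteness of $\Sigma$, yielding the explicit constant $c=\sqrt{\lambda_{\min}(\Sigma)}$. Your version buys constructiveness (a computable constant rather than bare existence) and dispenses with the dominated-convergence lemma by absorbing the compactness argument into finite-dimensional linear algebra; the paper's version is less structure-dependent and would survive in settings where the relevant statistic is not a quadratic form in $\bftheta$. Two small points to make explicit in a polished write-up: the push-forward identity $(\widehat{\Radon}_\bftheta[\mu])_\#\rho=\Radon_\bftheta[\mu]$ is where the atomlessness of $\rho$ enters, and passing from $\langle\bfx-\bar{\bfx},\bftheta\rangle=0$ for $\mu$-almost every $\bfx$ to containment of $\supp(\mu)$ in the line uses that the line is closed, so its $\mu$-null complement is open and meets no support point.
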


For the proof,
we first show the following continuity.

\begin{lemma}
    For fixed $\mu \in \P_c^*(\R^2)$,
    the functions
    $\bftheta \in \Sphere \mapsto \mean(\widehat{\Radon}_\bftheta [\mu]) \in \R$
    and
    $\bftheta \in \Sphere \mapsto \std(\widehat{\Radon}_\bftheta [\mu]) \in \R_{\geq 0}$
    are continuous.
\end{lemma}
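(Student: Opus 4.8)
The plan is to factor both maps as compositions of three continuous maps: the assignment $\bftheta \mapsto \Radon_\bftheta[\mu]$ into $(\P_2(\R), W_2)$, the CDT $\alpha \mapsto \widehat{\alpha}$ into $L^2_\rho(\R)$, and the scalar functionals $\mean$ and $\std$ on $L^2_\rho(\R)$. Since $\mu \in \P_c^*(\R^2)$ has compact support, fix $R > 0$ with $\supp(\mu) \subseteq R\,\Ball$. Then $\lvert S_\bftheta(\bfx)\rvert = \lvert\langle \bfx, \bftheta\rangle\rvert \le \lVert \bfx\rVert \le R$ for every $\bfx \in \supp(\mu)$ and $\bftheta \in \Sphere$, so each Radon projection $\Radon_\bftheta[\mu] = (S_\bftheta)_\# \mu$ is a probability measure supported in the fixed compact interval $[-R, R]$; in particular $\Radon_\bftheta[\mu] \in \P_2(\R)$ for all $\bftheta$.

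First I would show that $\bftheta \mapsto \Radon_\bftheta[\mu]$ is weak-* continuous. For $\phi \in C_b(\R)$ we have $\langle \Radon_\bftheta[\mu], \phi\rangle = \int_{\R^2} \phi(\langle \bfx, \bftheta\rangle)\, \d\mu(\bfx)$, where for fixed $\bfx$ the integrand depends continuously on $\bftheta$ and is bounded by $\lVert\phi\rVert_\infty$; dominated convergence with respect to the finite measure $\mu$ then yields continuity of $\bftheta \mapsto \langle \Radon_\bftheta[\mu], \phi\rangle$. Because all projections live on the common compact set $[-R,R]$ and $\Sphere$ is a compact metric space, it suffices to argue along sequences, and weak-* convergence on $[-R,R]$ is equivalent to convergence in $W_2$: any weak-* limit automatically matches second moments, as $t \mapsto t^2$ is bounded and continuous on $[-R, R]$. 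Hence $\bftheta \mapsto \Radon_\bftheta[\mu]$ is continuous as a map into $(\P_2(\R), W_2)$.

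The CDT is an isometry from $(\P_2(\R), W_2)$ into $L^2_\rho(\R)$, since $\lVert \widehat{\alpha} - \widehat{\beta}\rVert_\rho = W_2(\alpha, \beta)$ for $\alpha, \beta \in \P_2(\R)$ (recall that $\rho$ is atomless). Composing with the previous step shows that $\bftheta \mapsto \widehat{\Radon}_\bftheta[\mu] \in L^2_\rho(\R)$ is continuous. It remains to observe that $\mean$ and $\std$ are continuous on $L^2_\rho(\R)$: the mean $\mean(g) = \langle g, 1\rangle_\rho$ is a bounded linear functional, since $1 \in L^2_\rho(\R)$ because $\rho$ is a probability measure, while $\std(g) = (\lVert g\rVert_\rho^2 - \mean(g)^2)^{1/2}$ is continuous as a composition of the continuous norm, the continuous mean, and the square root. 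The claimed continuity of $\bftheta \mapsto \mean(\widehat{\Radon}_\bftheta[\mu])$ and $\bftheta \mapsto \std(\widehat{\Radon}_\bftheta[\mu])$ follows.

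The only nontrivial link is the upgrade from weak-* to $W_2$ continuity of $\bftheta \mapsto \Radon_\bftheta[\mu]$; the remaining steps are a restatement of the CDT isometry and elementary functional-analytic facts. I expect the main care to go into justifying that, on the fixed compact interval $[-R,R]$, weak-* convergence controls the second moments and hence the Wasserstein distance.
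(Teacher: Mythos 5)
Your proof is correct, but it takes a genuinely different route from the paper. The paper argues directly and scalar-wise: using the push-forward structure of the CDT, it rewrites $\mean(\widehat{\Radon}_\bftheta[\mu]) = \int_{\R^2} \langle \bfx, \bftheta\rangle \d\mu(\bfx)$ and $\std(\widehat{\Radon}_\bftheta[\mu])^2 = \int_{\R^2} \lvert \langle \bfx, \bftheta\rangle - \mean(\widehat{\Radon}_\bftheta[\mu])\rvert^2 \d\mu(\bfx)$ as integrals against $\mu$ itself, and then applies dominated convergence twice, with the uniform bounds $\lvert\langle\cdot,\bftheta\rangle\rvert \le \lVert\cdot\rVert$ and $2\lVert\cdot\rVert^2 + 2\max_{\bftheta \in \Sphere}(\mean(\widehat{\Radon}_\bftheta[\mu]))^2$ (the latter using the already-established continuity of the mean). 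You instead prove the stronger, more structural statement that $\bftheta \mapsto \widehat{\Radon}_\bftheta[\mu]$ is continuous as an $L^2_\rho(\R)$-valued map, by composing weak continuity of $\bftheta \mapsto \Radon_\bftheta[\mu]$, the metrization of weak convergence by $W_2$ on measures with common compact support, and the CDT isometry $\lVert\widehat{\alpha}-\widehat{\beta}\rVert_\rho = W_2(\alpha,\beta)$ (which the paper states for atomless $\rho$, so you may invoke it). Your version buys reusability---continuity of \emph{any} continuous functional of $\widehat{\Radon}_\bftheta[\mu]$, not just $\mean$ and $\std$, follows at once, and your identity $\std(g) = (\lVert g\rVert_\rho^2 - \mean(g)^2)^{1/2}$ handles both functionals simultaneously---at the cost of two heavier standard ingredients, whereas the paper's computation is shorter and self-contained. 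One cosmetic point: to extract convergence of second moments from weak convergence you should test against a bounded continuous extension of $t \mapsto t^2$ truncated outside $[-R,R]$ (weak convergence is tested in $C_b(\R)$); since all projections are supported in $[-R,R]$, this is immediate, but it deserves a word.
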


\begin{proof}
    We rewrite the mean as
    \begin{equation*}
        \mean(\widehat{\Radon}_\bftheta[\mu])
        =
        \int_\R \widehat{\Radon}_\bftheta [\mu] (t) \d \rho(t)
        =
        \int_\R t  \d \Radon_\bftheta [\mu] (t)
        =
        \int_{\R^2} \langle \bfx, \bftheta \rangle \d \mu(\bfx).
    \end{equation*}
    Since the integrand is continuous in $\bftheta$
    and uniformly bounded by 
    $\lvert \langle \cdot, \bftheta \rangle \rvert \le \lVert \cdot \rVert$,
    the dominated convergence yields the assertion.
    Analogously,
    we have
    \begin{align*}
        \std(\widehat{\Radon}_\bftheta [\mu])
        & =
        \Bigl(
        \int_\R 
        \lvert
            \widehat{\Radon}_\bftheta [\mu] (t)
            -
            \mean(\widehat{\Radon}_\bftheta [\mu])
        \rvert^2
        \d \rho(t)
        \Bigr)^{\frac{1}{2}}
        \\
        & =
        \Bigl(
        \int_{\R^2} 
        \lvert
            \langle \bfx, \bftheta \rangle
            -
            \mean(\widehat{\Radon}_\bftheta [\mu])
        \rvert^2
        \d \mu(\bfx)
        \Bigr)^{\frac{1}{2}}.
    \end{align*}
    The integrand is again continuous in $\bftheta$
    and uniformly bounded by
    \begin{equation*}
        \lvert
            \langle \cdot, \bftheta \rangle
            -
            \mean(\widehat{\Radon}_\bftheta [\mu])
        \rvert^2
        \le 
        2 \lVert \cdot \rVert^2 
        +
        2 \max_{\bftheta \in \Sphere} \;
        (\mean(\widehat{\Radon}_\bftheta [\mu]))^2;
    \end{equation*}
    thus,
    the standard deviation is continuous by dominated convergence.
    \qed
\end{proof}

\begin{proof}[Proposition~\ref{prop:sigma_bounded}]
    Assume the contrary,
    this is,
    $c = 0$.
    Then,
    due to the continuity of $\bftheta \mapsto \std(\widehat{\Radon}_\bftheta [\mu])$,
    there exists a minimizing and convergent sequence in $\Sphere$
    whose limit $\bftheta$ is attained and satisfies
    $\std(\widehat{\Radon}_\bftheta [\mu]) = 0$,
    i.e.,
    \begin{equation*}
        \int_{\R^2} \lvert \langle \bfx, \bftheta \rangle - \mean(\widehat{\Radon}_\bftheta [\mu]) \rvert^2 \d \mu(\bfx) = 0.
    \end{equation*}
    Hence, 
    the support of $\mu$ is contained in the line 
    $\{\bfx \in \R^2 \mid \langle \bfx, \bftheta \rangle = \mean(\widehat{\Radon}_\bftheta [\mu])\}$
    in contradiction to $\mu \in \P_c^*(\R^2)$.
    \qed
\end{proof}

The NR-CDT is nearly invariant under affine transformations
up to bijective remappings of the directions,
i.e.,
up to a resorting of the family 
$\{\NRCDT_\bftheta[\mu] \mid \bftheta \in \Sphere\}$.

\begin{proposition}
    Let $\mu \in \P_c^*(\R^2)$,
    $\bfA \in \GL(2)$,
    $\bfy \in \R^2$,
    and $\mu_{\bfA, \bfy}$ as in \eqref{eq:aff-meas}.
    Then, for any $\bftheta \in \Sphere$,
    the NR-CDT satisfies
    \begin{equation*}
        \NRCDT_\bftheta[\mu_{\bfA, \bfy}] 
        =
        \NRCDT_{\frac{\bfA^\top \bftheta}{\lvert\bfA^\top \bftheta\rvert}} [\mu].
    \end{equation*}
\end{proposition}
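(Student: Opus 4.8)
The plan is to reduce everything to the behaviour of the one-dimensional CDT under an affine reparametrisation of $\R$, for which Proposition~\ref{prop:RT_transformation} does the heavy lifting. Write $a \coloneqq \lVert \bfA^\top \bftheta \rVert$, $b \coloneqq \langle \bfy, \bftheta\rangle$, and $\bfpsi \coloneqq \bfA^\top \bftheta / \lVert \bfA^\top \bftheta \rVert$. Since $\bfA \in \GL(2)$ is invertible, we have $a > 0$, and Proposition~\ref{prop:RT_transformation} expresses the Radon projection of the transformed measure as the orientation-preserving affine push-forward
\begin{equation*}
    \Radon_\bftheta[\mu_{\bfA,\bfy}] = (a \cdot + b)_\# \, \Radon_{\bfpsi}[\mu].
\end{equation*}
Before normalising I would also record that the left-hand side is well defined: affine bijections preserve both the compactness and the affine-hull dimension of the support, so $\mu_{\bfA,\bfy} \in \P_c^*(\R^2)$, and Proposition~\ref{prop:sigma_bounded} applied to $\mu$ guarantees $\std(\widehat{\Radon}_{\bfpsi}[\mu]) > 0$.

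First I would establish how the CDT reacts to such a push-forward. For $\nu \in \P(\R)$ and $a>0$, the cumulative distribution function of $(a\cdot + b)_\#\nu$ is $t \mapsto F_\nu\bigl((t-b)/a\bigr)$; substituting $t = as+b$ in the definition of the generalized inverse and using $a>0$ so that the infimum commutes with the increasing map $s \mapsto as+b$ yields $F_{(a\cdot+b)_\#\nu}^{[-1]} = a\,F_\nu^{[-1]} + b$. Composing with $F_\rho$ gives the CDT equivariance $\widehat{(a\cdot+b)_\#\nu} = a\,\widehat{\nu} + b$. Applied to $\nu = \Radon_{\bfpsi}[\mu]$, this produces
\begin{equation*}
    \widehat{\Radon}_\bftheta[\mu_{\bfA,\bfy}] = a\,\widehat{\Radon}_{\bfpsi}[\mu] + b.
\end{equation*}

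Next I would note that the normalisation statistics are affine-equivariant: because $\rho$ is a probability measure, $\mean(a g + b) = a\,\mean(g) + b$, and since $a>0$, $\std(a g + b) = a\,\std(g)$ for every $g \in L^2_\rho(\R)$. Substituting the previous display into the definition of $\NRCDT_\bftheta[\mu_{\bfA,\bfy}]$, the additive constant $b$ cancels in the centred numerator and the positive factor $a$ then cancels between numerator and denominator, leaving exactly $\NRCDT_{\bfpsi}[\mu]$, which is the claim.

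The only genuinely delicate point is the CDT equivariance $\widehat{(a\cdot+b)_\#\nu} = a\,\widehat{\nu} + b$, where one must handle the generalized inverse of the transformed distribution function carefully; here the sign condition $a>0$ --- guaranteed precisely by the invertibility of $\bfA$ --- is what allows the infimum in the quantile definition to commute with the affine map. Had $a$ been allowed to be negative, the infimum would turn into a supremum and the clean relation would fail. Everything downstream is routine cancellation.
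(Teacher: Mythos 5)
Your proposal is correct and follows essentially the same route as the paper's proof: reduce via Proposition~\ref{prop:RT_transformation} to the affine push-forward identity $\widehat{\Radon}_\bftheta[\mu_{\bfA,\bfy}] = \lVert \bfA^\top\bftheta\rVert\,\widehat{\Radon}_{h_\bfA(\bftheta)}[\mu] + \langle\bfy,\bftheta\rangle$, then cancel the shift and the positive scale factor through the mean and standard deviation. The only difference is that you spell out the CDT equivariance $\widehat{(a\cdot+b)_\#\nu} = a\,\widehat{\nu}+b$ via the quantile function (and note $a>0$ from invertibility of $\bfA$), a step the paper asserts without derivation.
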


\begin{proof}
    Transferring Proposition~\ref{prop:RT_transformation} to the CDT space,
    we have
    \begin{equation*}
        \widehat{\Radon}_\bftheta [\mu_{\bfA, \bfy}](t)
        =
        \lVert \bfA^\top \bftheta \rVert \,
        \widehat{\Radon}_{h_\bfA(\bftheta)} [\mu](t)
        + 
        \langle \bfy, \bftheta \rangle
    \end{equation*}
    with the bijection $h_\bfA (\bftheta) \coloneqq (\bfA^\top \bftheta) / \lVert \bfA^\top \bftheta \rVert$,
    $\bftheta \in \Sphere$;
    so that
    \begin{equation*}
        \mean(\widehat{\Radon}_\bftheta [\mu_{\bfA,\bfy}]) 
        =
        \lVert \bfA^\top \bftheta \rVert  
        \mean (\widehat{\Radon}_{h_\bfA(\bftheta)} [\mu] ) 
        +
        \langle \bfy, \bftheta \rangle
    \end{equation*}
    and
    \begin{equation*}
        \std(\widehat{\Radon}_\bftheta [\mu_{\bfA, \bfy}]) 
        =
        \lVert \bfA^\top \bftheta \rVert 
        \std(\widehat{\Radon}_{h_\bfA(\bftheta)} [\mu]).
    \end{equation*}
    Consequently,
    \begin{equation*}
        \NRCDT_\bftheta [\mu_{\bfA, \bfy}](t) 
        =
        \frac
        {
            \widehat{\Radon}_{h_\bfA(\bftheta)} [\mu](t)
            - 
            \mean(\widehat{\Radon}_{h_\bfA(\bftheta)} [\mu])
        }{
            \std(\widehat{\Radon}_{h_\bfA(\bftheta)} [\mu])
        }
        =
        \NRCDT_{h_\bfA(\bftheta)} [\mu](t).
        \tag*{\qed}
    \end{equation*}
\end{proof}

\subsection{Max-Normalized R-CDT}

In the final normalization step,
we treat the resorting of 
$\{\NRCDT_\bftheta[\mu] \mid \bftheta \in \Sphere\}$.
Since the underlying mapping is unknown in general
and cannot be reverted,
we propose to take the supremum over all directions.
More precisely,
for $\mu \in \P_c^*(\R^2)$, 
we define its \emph{max-normalized R-CDT} (\mNRCDT) $\maxNRCDT [\mu] \colon \R \to \R$ 
via
\begin{equation*}
    \maxNRCDT [\mu](t) 
    \coloneqq  
    \sup_{\bftheta \in \Sphere} \NRCDT_\bftheta [\mu](t),
    \quad  t \in \R.
\end{equation*}
We show that
$\maxNRCDT$ maps a given measure
to a bounded function
so that
the \mNRCDT{} space $\maxNRCDT[\P_c^*(\R^2)]$
is contained in $L_\rho^\infty(\R)$ for the underlying reference measure $\rho \in \P(\R)$.

\begin{proposition}
    Let $\mu \in \P_c^*(\R^2)$.
    Then, $\maxNRCDT [\mu] \in L^\infty_\rho(\R)$.
\end{proposition}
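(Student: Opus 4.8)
The plan is to reduce the statement to a single uniform bound $\lvert \NRCDT_\bftheta[\mu](t)\rvert \le M$ that holds for every $\bftheta \in \Sphere$ and every $t$ outside one fixed $\rho$-null set, so that the supremum over $\bftheta$ inherits this bound automatically. First I would exploit that $\mu \in \P_c^*(\R^2)$ has compact support, say $\supp(\mu) \subseteq \{\bfx \in \R^2 \mid \lVert \bfx \rVert \le R\}$ for some $R > 0$. Since $\lvert S_\bftheta(\bfx)\rvert = \lvert\langle \bfx, \bftheta\rangle\rvert \le \lVert\bfx\rVert \le R$, the push-forward $\Radon_\bftheta[\mu] = (S_\bftheta)_\#\mu$ is supported in $[-R,R]$ for every $\bftheta \in \Sphere$. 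Consequently its quantile function, and hence the CDT $\widehat{\Radon}_\bftheta[\mu] = F_{\Radon_\bftheta[\mu]}^{[-1]} \circ F_\rho$, takes values in $[-R,R]$ at every $t$ with $F_\rho(t) \in (0,1)$. Together with the identity $\mean(\widehat{\Radon}_\bftheta[\mu]) = \int_{\R^2}\langle\bfx,\bftheta\rangle\,\d\mu(\bfx)$ already derived in the preceding lemma, which gives $\lvert\mean(\widehat{\Radon}_\bftheta[\mu])\rvert \le R$, this controls the numerator uniformly: $\lvert\widehat{\Radon}_\bftheta[\mu](t) - \mean(\widehat{\Radon}_\bftheta[\mu])\rvert \le 2R$.

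For the denominator I would invoke Proposition~\ref{prop:sigma_bounded}, which yields a constant $c > 0$ with $\std(\widehat{\Radon}_\bftheta[\mu]) \ge c$ for all $\bftheta \in \Sphere$; this is exactly where the hypothesis $\dim(\supp(\mu)) > 1$ is used. Dividing then gives $\lvert\NRCDT_\bftheta[\mu](t)\rvert \le 2R/c$ for all $\bftheta \in \Sphere$ and all $t$ with $F_\rho(t) \in (0,1)$. The decisive point is that the exceptional set $N \coloneqq \{t \in \R \mid F_\rho(t) \in \{0,1\}\}$ is independent of $\bftheta$: since $\rho$ gives no mass to atoms, $F_\rho$ is continuous and $\rho(N) = 0$. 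Hence, taking the supremum over $\bftheta \in \Sphere$ for each fixed $t \notin N$ produces $\lvert\maxNRCDT[\mu](t)\rvert \le 2R/c$ for $\rho$-almost every $t$, which is the required essential boundedness.

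The main obstacle is not the size estimate but the fact that $\maxNRCDT[\mu]$ is a supremum over the uncountable index set $\Sphere$, so I must still certify that it is $\rho$-measurable before invoking its $L^\infty_\rho$-norm. The clean route is to show that, for each fixed $t \notin N$, the map $\bftheta \mapsto \NRCDT_\bftheta[\mu](t)$ is continuous on $\Sphere$ — combining the continuity of $\bftheta \mapsto \mean(\widehat{\Radon}_\bftheta[\mu])$ and $\bftheta \mapsto \std(\widehat{\Radon}_\bftheta[\mu])$ from the preceding lemma with continuity of the quantile term $\bftheta \mapsto \widehat{\Radon}_\bftheta[\mu](t)$ — so that the supremum over $\Sphere$ agrees with the supremum over a fixed countable dense subset of $\Sphere$. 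That latter supremum is a countable supremum of $t$-measurable functions, hence measurable, and the uniform bound of the previous paragraph upgrades it to $\maxNRCDT[\mu] \in L^\infty_\rho(\R)$. I expect the continuity of the quantile part in $\bftheta$ to be the delicate step, since it can fail where $\Radon_\bftheta[\mu]$ carries atoms; this is where I would spend the most care, exploiting weak continuity of $\bftheta \mapsto \Radon_\bftheta[\mu]$ and continuity points of the limiting quantile.
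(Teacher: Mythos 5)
Your core estimate is exactly the paper's proof. The paper bounds the numerator by $\diam(\mu)$---using that the range of $\widehat{\Radon}_\bftheta[\mu]$ coincides with the support of $\Radon_\bftheta[\mu]$, whose diameter the restricted Radon transform cannot enlarge, and that the mean lies in the convex hull of the support---and then divides by the uniform lower bound $c>0$ from Proposition~\ref{prop:sigma_bounded}, obtaining $\lvert \maxNRCDT[\mu](t)\rvert \le \diam(\mu)/c$; your bound $2R/c$ with $\supp(\mu) \subseteq \{\bfx \mid \lVert\bfx\rVert \le R\}$ is the same argument with a slightly coarser constant. Where you go beyond the paper is the measurability of the supremum over the uncountable index set $\Sphere$, which the paper passes over in silence; raising it is legitimate, since a pointwise bound alone does not certify membership in $L^\infty_\rho(\R)$. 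However, the route you sketch---continuity of $\bftheta \mapsto \widehat{\Radon}_\bftheta[\mu](t)$ for fixed $t$---is exactly as delicate as you fear, and your diagnosis of the obstruction is slightly off: under weak convergence $\Radon_{\bftheta_n}[\mu] \to \Radon_\bftheta[\mu]$, quantiles converge at continuity points of the limiting quantile, and the discontinuities of $F_{\Radon_\bftheta[\mu]}^{[-1]}$ stem from \emph{gaps} in $\supp(\Radon_\bftheta[\mu])$, not from atoms (atoms produce jumps of $F$ and hence harmless flats of the quantile). At such gap levels the quantile map can fail even lower semicontinuity in $\bftheta$, so the reduction to a countable dense subset of $\Sphere$ would require showing the bad values of $t$ form a $\rho$-null set---and these null sets vary with $\bftheta$, an uncountable union. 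There is a much cheaper fix you overlooked: for each fixed $\bftheta$, the map $t \mapsto \widehat{\Radon}_\bftheta[\mu](t) = F_{\Radon_\bftheta[\mu]}^{[-1]} \circ F_\rho(t)$ is non-decreasing as a composition of non-decreasing functions, hence so is $\NRCDT_\bftheta[\mu]$ (positive scaling and constant shift), hence so is the pointwise supremum $\maxNRCDT[\mu]$; monotone functions are Borel measurable, and combined with your uniform bound this yields $\maxNRCDT[\mu] \in L^\infty_\rho(\R)$ with no continuity-in-$\bftheta$ analysis at all.
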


\begin{proof}
    The restricted Radon operator cannot enlarge the size of the support
    $\diam(\mu) \coloneqq \sup_{\bfx, \bfy \in \supp(\mu)} \, \lVert \bfx - \bfy \rVert$,
    this is,
    $\diam(\Radon_\bftheta [\mu]) \le \diam(\mu)$.
    Moreover, the range of $\widehat{\Radon}_\bftheta [\mu]$ coincides
    with the support of $\Radon_\bftheta [\mu]$.
    Using that the mean lies in the convex hull of the support,
    we thus have
    \begin{equation*}
        \lvert 
        \widehat{\Radon}_\bftheta [\mu] (t) 
        - 
        \mean(\widehat{\Radon}_\bftheta [\mu])
        \rvert
        \le 
        \diam(\mu)
        \quad
        \forall \, \bftheta \in \Sphere.
    \end{equation*}
    Since $\mu \in \P_c^*(\R^2)$,
    Proposition~\ref{prop:sigma_bounded} gives
    $c \coloneqq \min_{\bftheta \in \Sphere} \std(\widehat{\Radon}_\bftheta [\mu]) > 0$.
    Thus,
    the \mNRCDT{} is bounded by
    $\lvert \maxNRCDT [\mu](t) \rvert \le \diam(\mu) / c$
    for all $t \in \R$.
    \qed
\end{proof}

With the \mNRCDT{},
we accomplish our objective
to define a transport-based transform
that is invariant under affine transformations.

\begin{proposition}
    \label{prop:inv-mnr-dct}
    Let $\mu \in \P_c^*(\R^2)$,
    $\bfA \in \GL(2)$,
    $\bfy \in \R^2$,
    and $\mu_{\bfA, \bfy}$ as in~\eqref{eq:aff-meas}.
    Then,
    the \mNRCDT{} satisfies
    $\maxNRCDT[\mu_{\bfA, \bfy}] = \maxNRCDT[\mu]$.
\end{proposition}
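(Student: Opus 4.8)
The plan is to reduce the claim entirely to the transformation law for the NR-CDT established in the preceding proposition, which states that for every direction $\bftheta \in \Sphere$ one has $\NRCDT_\bftheta[\mu_{\bfA,\bfy}] = \NRCDT_{h_\bfA(\bftheta)}[\mu]$, where $h_\bfA(\bftheta) \coloneqq (\bfA^\top\bftheta)/\lVert\bfA^\top\bftheta\rVert$ is a bijection of $\Sphere$. Since the \mNRCDT{} is by definition the pointwise supremum over $\bftheta \in \Sphere$ of the family $\{\NRCDT_\bftheta[\mu]\}$, an affine transformation acts on this family only through the reindexing $\bftheta \mapsto h_\bfA(\bftheta)$, and a supremum is insensitive to a bijective reindexing of the set over which it ranges. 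The whole point of the max-normalization step is precisely to forget this unknown reordering of directions, so the result should fall out immediately.

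Concretely, I would fix $t \in \R$ and compute
\begin{equation*}
    \maxNRCDT[\mu_{\bfA,\bfy}](t)
    = \sup_{\bftheta \in \Sphere} \NRCDT_\bftheta[\mu_{\bfA,\bfy}](t)
    = \sup_{\bftheta \in \Sphere} \NRCDT_{h_\bfA(\bftheta)}[\mu](t),
\end{equation*}
the second equality being the preceding proposition applied at the point $t$. Substituting $\bfpsi = h_\bfA(\bftheta)$ and using that $h_\bfA \colon \Sphere \to \Sphere$ is a bijection, the index set of the supremum is merely relabeled, so that the family of real values $\{\NRCDT_{h_\bfA(\bftheta)}[\mu](t) \mid \bftheta \in \Sphere\}$ coincides as a subset of $\R$ with $\{\NRCDT_{\bfpsi}[\mu](t) \mid \bfpsi \in \Sphere\}$. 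Their suprema therefore agree, giving $\maxNRCDT[\mu_{\bfA,\bfy}](t) = \sup_{\bfpsi \in \Sphere}\NRCDT_{\bfpsi}[\mu](t) = \maxNRCDT[\mu](t)$; as $t$ was arbitrary, this is the desired identity of functions.

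There is no genuine obstacle here once the NR-CDT transformation law is available; the only point deserving a moment's care is to confirm that $h_\bfA$ is a bijection of the \emph{entire} sphere, so that no direction is dropped or counted twice when passing to the supremum. This is guaranteed by $\bfA \in \GL(2)$: injectivity follows since $\bfA^\top$ is invertible and the normalizing scalar is positive, while surjectivity follows from the explicit preimage $\bftheta = ((\bfA^\top)^{-1}\bfpsi)/\lVert(\bfA^\top)^{-1}\bfpsi\rVert$ of any $\bfpsi \in \Sphere$. Bijectivity was already recorded in the preceding proposition, so I would simply cite it there. If one wishes to be fully careful about the suprema being finite rather than merely equal as extended reals, one may additionally invoke the boundedness shown above, namely $\maxNRCDT[\mu] \in L^\infty_\rho(\R)$.
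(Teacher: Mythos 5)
Your proposal is correct and follows essentially the same route as the paper: it invokes the NR-CDT transformation law $\NRCDT_\bftheta[\mu_{\bfA,\bfy}] = \NRCDT_{h_\bfA(\bftheta)}[\mu]$ from the preceding proposition and then observes that the pointwise supremum is unchanged under the bijective reindexing $h_\bfA$ of $\Sphere$. Your additional remarks on the bijectivity of $h_\bfA$ and the finiteness of the suprema are sound but not needed beyond what the paper already records.
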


\begin{proof}
    Since the mapping 
    $h_\bfA(\bftheta) \coloneqq (\bfA^\top \bftheta) / \lVert \bfA^\top \bftheta \rVert$
    is a bijection on $\Sphere$,
    we obtain
    \begin{equation*}
        \maxNRCDT[\mu_{\bfA,\bfy}](t)
        =
        \sup_{\bftheta \in \Sphere} \NRCDT_\bftheta [\mu_{\bfA, \bfy}](t) 
        =
        \sup_{\bftheta \in \Sphere} \NRCDT_{h_\bfA(\bftheta)} [\mu](t) 
        =
        \maxNRCDT [\mu](t).
        \tag*{\qed}
    \end{equation*}        
\end{proof}

The invariance 
under affine transformations 
immediately yields the linear separability
of affine measure classes,
which originate from a single template.

\begin{theorem}
    \label{thm:sep-max-nrcdt}
    For template measures $\mu_0, \nu_0 \in \P_c^*(\R^2)$ with
    \begin{equation*}
        \maxNRCDT [\mu_0] \neq \maxNRCDT [\nu_0]
    \end{equation*}
    consider the classes
    \begin{subequations}
    \label{eq:aff-class}
    \begin{align}
        \F 
        &=
        \bigl\{\mu \in \P(\R^2) \mid \exists \, \bfA \in \GL(2), \, \bfy \in \R^2 \colon \mu = (\bfA \cdot + \bfy)_\# \mu_0\bigr\},
        \\
        \G 
        &= 
        \bigl\{\nu \in \P(\R^2) \mid \exists \, \bfA \in \GL(2), \, \bfy \in \R^2 \colon \nu = (\bfA \cdot + \bfy)_\# \nu_0\bigr\}.
    \end{align}
    \end{subequations}
    Then, $\F$ and $\G$ are linearly separable in \mNRCDT{}~space.
\end{theorem}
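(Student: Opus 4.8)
The plan is to reduce the separability claim to the affine invariance established in Proposition~\ref{prop:inv-mnr-dct}. The crucial observation is that the \mNRCDT{} collapses each entire affine class to a single point in function space. Concretely, for any $\mu \in \F$ there exist $\bfA \in \GL(2)$ and $\bfy \in \R^2$ with $\mu = (\bfA \cdot + \bfy)_\# \mu_0$, so Proposition~\ref{prop:inv-mnr-dct} gives $\maxNRCDT[\mu] = \maxNRCDT[\mu_0]$; likewise $\maxNRCDT[\nu] = \maxNRCDT[\nu_0]$ for every $\nu \in \G$. Hence the images $\maxNRCDT[\F]$ and $\maxNRCDT[\G]$ are the singletons $\{\maxNRCDT[\mu_0]\}$ and $\{\maxNRCDT[\nu_0]\}$, which are distinct by hypothesis.

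It then remains to separate two distinct points of the \mNRCDT{} space by an affine hyperplane. First I would fix the ambient space: since $\rho$ is a probability measure, the bounded functions $\maxNRCDT[\mu_0], \maxNRCDT[\nu_0] \in L_\rho^\infty(\R)$ also lie in the Hilbert space $L_\rho^2(\R)$, which I would take as the feature space. Working in a Hilbert space lets me exhibit the separating functional explicitly rather than invoking Hahn--Banach abstractly.

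Next I would set $w \coloneqq \maxNRCDT[\mu_0] - \maxNRCDT[\nu_0] \in L_\rho^2(\R)$, which is nonzero by the standing assumption, and define the bounded linear functional $\Lambda(g) \coloneqq \langle g, w \rangle_\rho$ together with the threshold $c \coloneqq \tfrac{1}{2}\langle \maxNRCDT[\mu_0] + \maxNRCDT[\nu_0], w\rangle_\rho$. A one-line computation then shows $\Lambda(\maxNRCDT[\mu_0]) - c = \tfrac{1}{2}\lVert w \rVert_\rho^2 > 0$ and $\Lambda(\maxNRCDT[\nu_0]) - c = -\tfrac{1}{2}\lVert w\rVert_\rho^2 < 0$, so the affine hyperplane $\{g \mid \Lambda(g) = c\}$ strictly separates the two singletons, and with them all of $\maxNRCDT[\F]$ and $\maxNRCDT[\G]$.

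I do not expect a genuine obstacle here: once the invariance of Proposition~\ref{prop:inv-mnr-dct} is in hand, each class degenerates to a single point and separability is essentially the statement that two distinct vectors can be split by a hyperplane. The only points deserving care are purely foundational: confirming that the chosen functional is well defined and continuous on the feature space (guaranteed since $w \in L_\rho^2$ and $\rho(\R) = 1$), and fixing whether one separates in $L_\rho^\infty$ or, as I would prefer, in the larger Hilbert space $L_\rho^2$ where the separating hyperplane can be written down by hand.
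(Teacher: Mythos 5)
Your proposal is correct and follows essentially the same route as the paper: Proposition~\ref{prop:inv-mnr-dct} collapses $\F$ and $\G$ to the singletons $\{\maxNRCDT[\mu_0]\}$ and $\{\maxNRCDT[\nu_0]\}$, which are then separated by a hyperplane. The only (harmless) variation is that the paper simply asserts separability of two distinct points in $L^\infty_\rho(\R)$, whereas you pass to the Hilbert space $L^2_\rho(\R)$ --- legitimate since $\rho$ is a probability measure, so $L^\infty_\rho \subset L^2_\rho$ and the hypothesis $\maxNRCDT[\mu_0] \neq \maxNRCDT[\nu_0]$ (understood $\rho$-a.e., as in the paper) gives $w \neq 0$ there --- in order to write the separating functional down explicitly.
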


\begin{proof}
    Due to the affine construction of $\F$ and $\G$,
    Proposition~\ref{prop:inv-mnr-dct} yields
    $\maxNRCDT [\F] = \bigl\{\maxNRCDT [\mu_0]\bigr\}$
    and $\maxNRCDT [\G] = \bigl\{\maxNRCDT [\nu_0]\bigr\}$.
    Hence, 
    the assumption
    $\maxNRCDT [\mu_0] \neq \maxNRCDT [\nu_0]$
    implies the linear separability of 
    $\maxNRCDT [\F]$ and $\maxNRCDT [\G]$
    in $L^\infty_\rho(\R)$.
    \qed
\end{proof}

\section{Numerical experiments}
\label{sec:num-ex}

By the following proof-of-concept experiments,
we support our linear separability result in Theorem~\ref{thm:sep-max-nrcdt}
with numerical evidence. 
For this,
the proposed \mNRCDT{} is implemented in Julia%
\footnote{The Julia Programming Language -- Version 1.9.2 
(\url{https://docs.julialang.org}).}.
All experiments%
\footnote{The code will be available at GitHub:
\url{https://github.com/DrBeckmann/NR-CDT}.}
are performed on an off-the-shelf MacBookPro 2020 
with Intel Core i5 Chip (4‑Core CPU, 1.4~GHz) and 8~GB~RAM.

\paragraph{Datasets.}
For our simulations,
we rely on two datasets.
For academic purposes,
the first dataset
is based on (up to) three synthetic template symbols,
which are randomly translated, rotated, dilated, and sheared,
cf.\ Figure~\ref{fig:templates_syn}.
In this manner, we construct perfect affine classes
as needed for our theory,
see~\eqref{eq:aff-class}.
For a more realistic scenario,
we also consider the LinMNIST dataset \cite{Beckmann2024}
consisting of affinely transformed MNIST digits~\cite{Deng2012},
cf.\ Figure~\ref{fig:templates_mnist}.
In contrast to the first dataset,
this data does not originate from a common ground truth.
Therefore,
the second dataset can be considered as a
collection of imperfect affine classes.

\begin{figure}[t]
\begin{minipage}[t]{0.49\linewidth}
    \begin{tabular}{c @{\hspace{-0.15cm}} c @{\hspace{-0.15cm}} c}
        class 1 & class 2 & class 3 \\
        \includegraphics[width=0.33\linewidth, clip=true, trim=460pt 285pt 30pt 30pt]{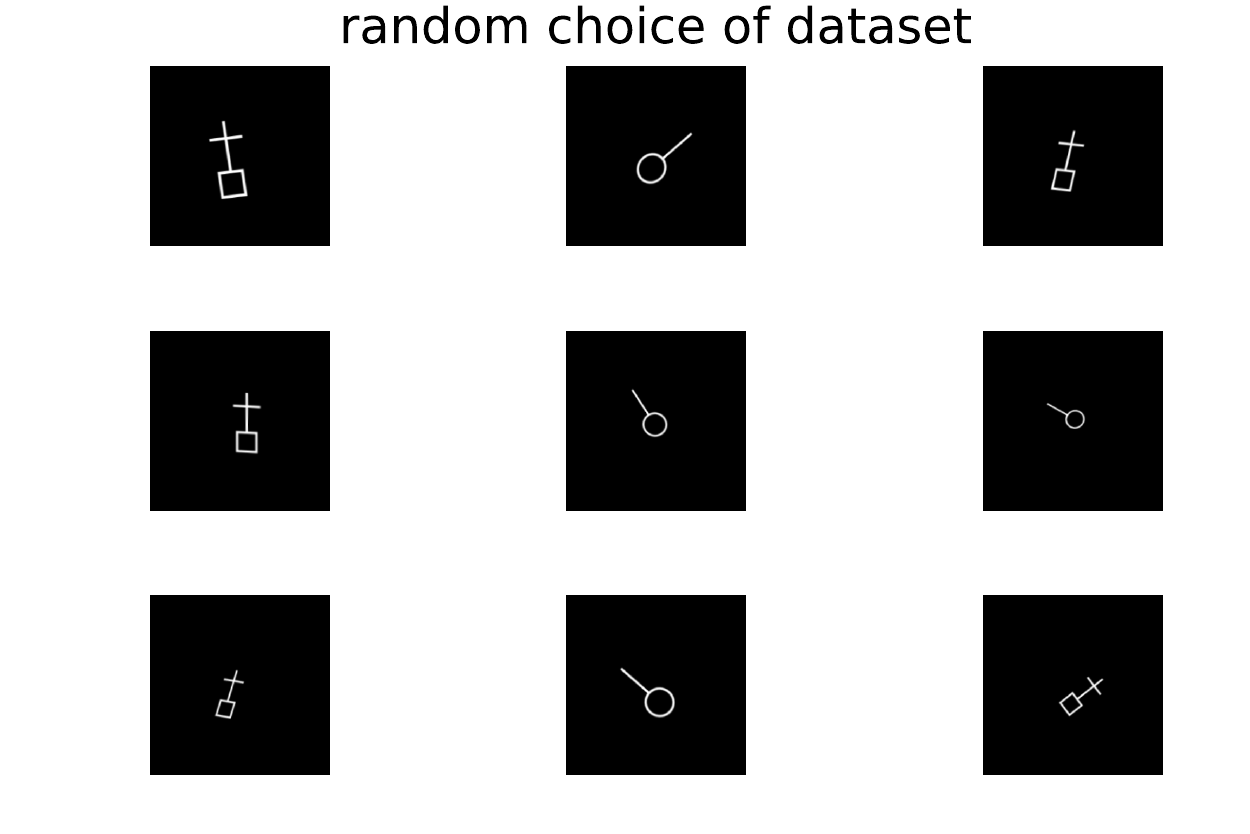} 
        & \includegraphics[width=0.33\linewidth, clip=true, trim=260pt 285pt 230pt 30pt]{Images/synth_data.pdf}
        & \includegraphics[width=0.33\linewidth, clip=true, trim=260pt 285pt 230pt 30pt]{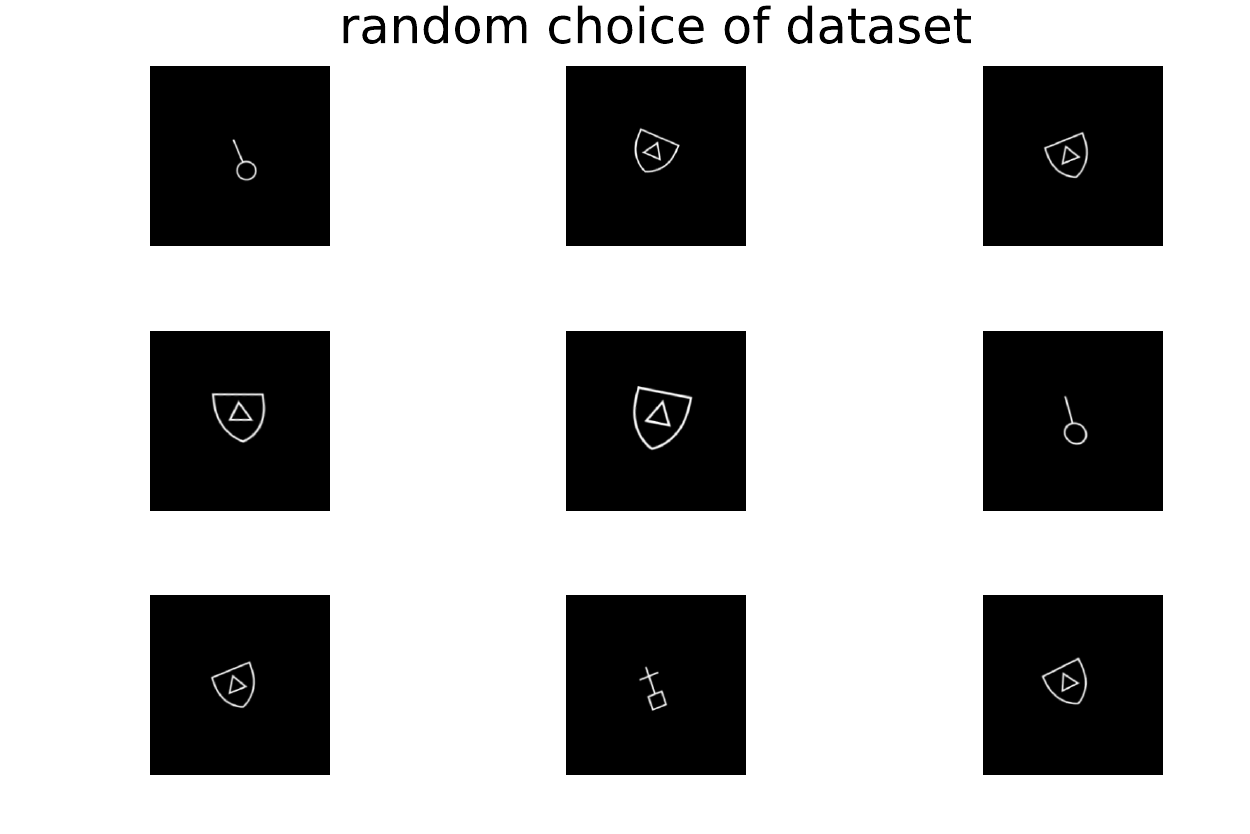}\\
        \includegraphics[width=0.33\linewidth, clip=true, trim=60pt 160pt 430pt 150pt]{Images/synth_data.pdf} 
        & \includegraphics[width=0.33\linewidth, clip=true, trim=460pt 160pt 30pt 150pt]{Images/synth_data.pdf}
        & \includegraphics[width=0.33\linewidth, clip=true, trim=60pt 160pt 430pt 150pt]{Images/synth_data_shield.pdf}
    \end{tabular}
    \captionof{figure}{Samples of the academic dataset 
    consisting of randomly affine-transformed
    synthetic template images.}
    \label{fig:templates_syn}
\end{minipage}
\hfill
\begin{minipage}[t]{0.49\linewidth}
    \begin{tabular}{c @{\hspace{-0.15cm}} c @{\hspace{-0.15cm}} c}
        class 1 & class 5 & class 7 \\
        \includegraphics[width=0.325\linewidth, clip=true, trim=60pt 285pt 430pt 30pt]{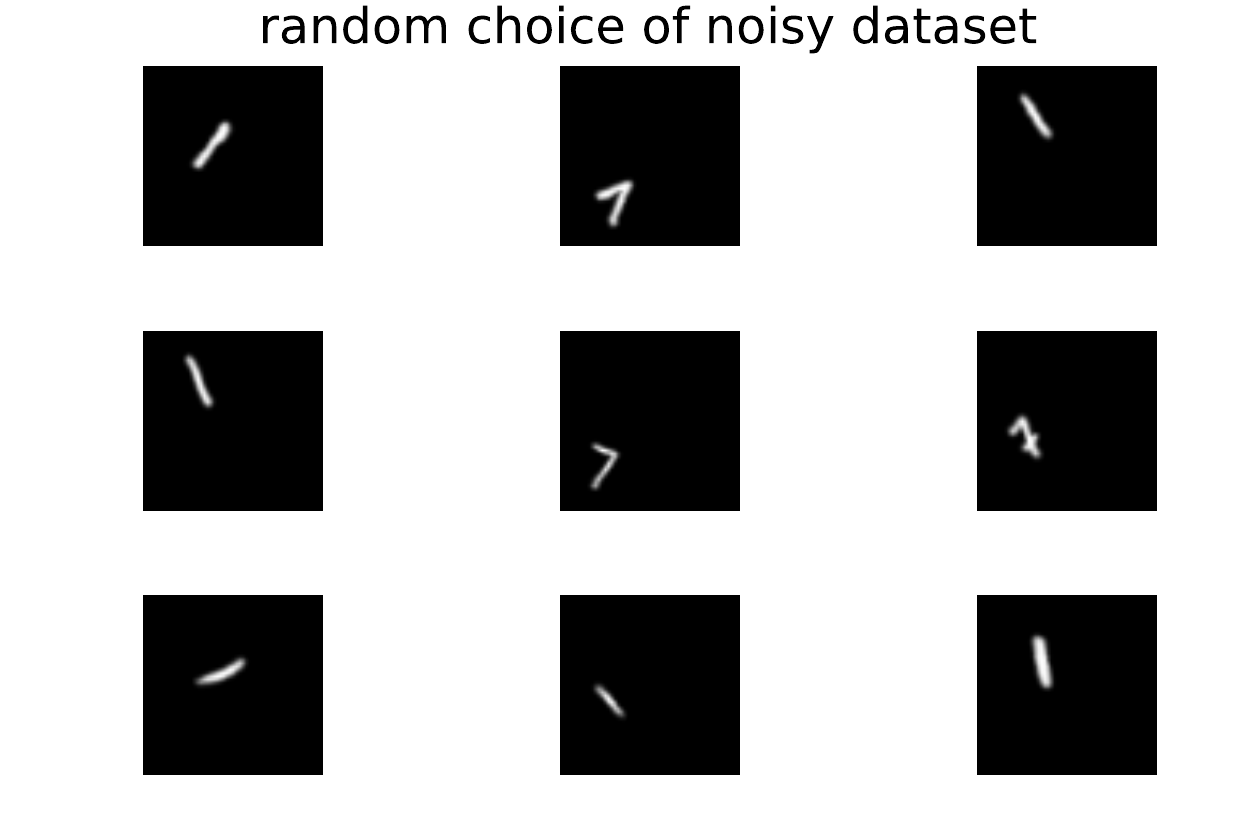}
        & \includegraphics[width=0.325\linewidth, clip=true, trim=460pt 30pt 30pt 282pt]{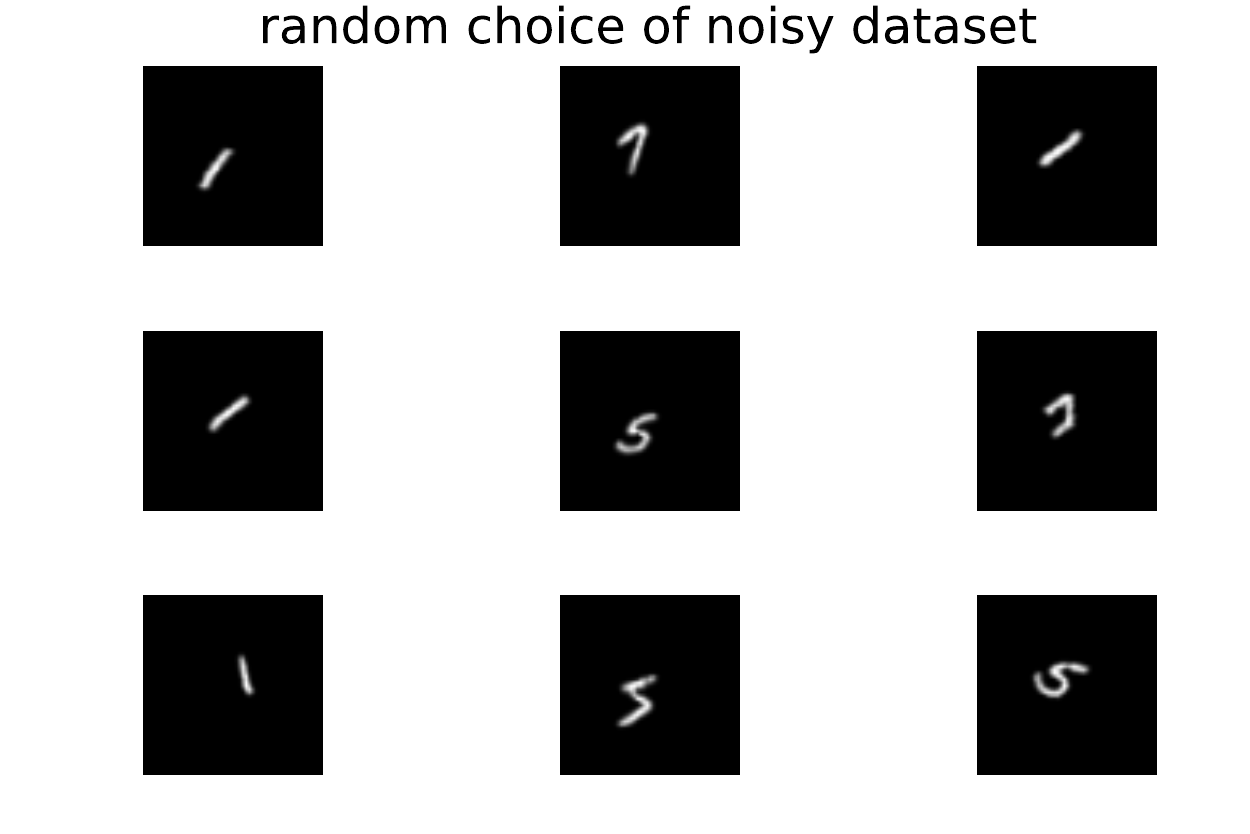}
        & \includegraphics[width=0.325\linewidth, clip=true, trim=260pt 285pt 230pt 30pt]{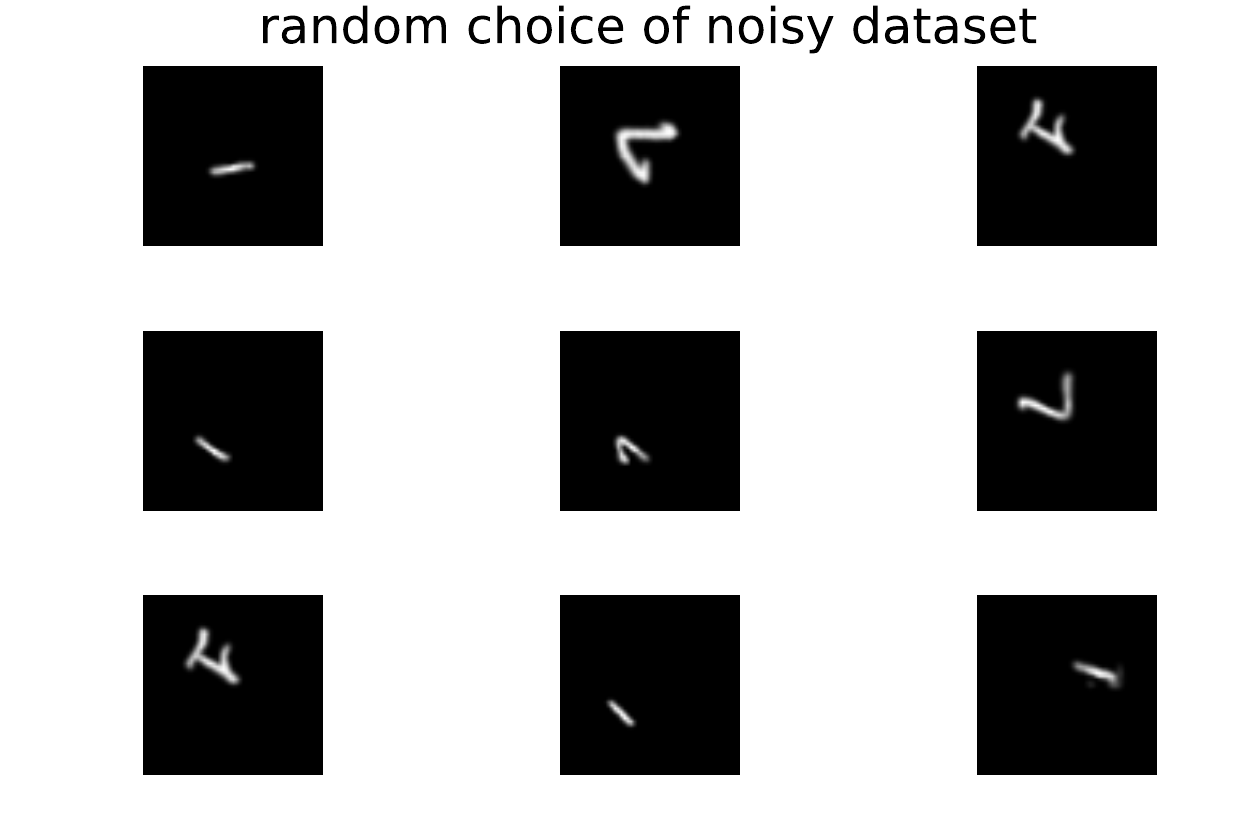} \\
        \includegraphics[width=0.325\linewidth, clip=true, trim=60pt 30pt 430pt 282pt]{Images/plt_mnist_rand_data_noise-1.pdf}
        & \includegraphics[width=0.325\linewidth, clip=true, trim=260pt 30pt 230pt 277pt]{Images/plt_mnist_rand_data_noise-3.pdf}
        & \includegraphics[width=0.325\linewidth, clip=true, trim=460pt 157pt 30pt 150pt]{Images/plt_mnist_rand_data_noise-2.pdf}
    \end{tabular}
    \captionof{figure}{Samples of the LinMNIST dataset 
    (random choices of ones, fifths and sevens) 
    based on affine-transformed MNIST digits.}
    \label{fig:templates_mnist}
\end{minipage}
\end{figure}

\begin{figure}[t]
    \begin{minipage}{0.49\linewidth}
    \captionof{table}{Accuracy of nearest neighbor classification
    for the academic dataset
    with 10 images per class
    and the LinMNIST dataset
    with 50 images per class.}
    \label{tab:nearest_neighbor}
    \resizebox{\linewidth}{!}{\begin{tabular}{l @{\quad} l @{\quad} l @{\quad} l @{\quad} l}
        \toprule
        num. & \multicolumn{2}{l}{academic}  & \multicolumn{2}{l}{LinMNIST} \\
        angles & $\|\cdot\|_\infty$ & $\|\cdot\|_2$ & $\|\cdot\|_\infty$ & $\|\cdot\|_2$\\
        \midrule
        2 & 0.76 & 1.00 &  $0.540\pm0.126$ & $0.591\pm0.130$ \\
        4 & 0.83 & 0.93 &  $0.565\pm0.104$ & $0.642\pm0.105$\\
        8 & 1.00 & 1.00 & $0.644\pm0.120$ & $0.726\pm0.119$\\
        16 & 1.00 & 1.00 & $0.654\pm0.115$ & $0.726\pm0.120$\\
        32 & 1.00 & 1.00 & $0.655\pm0.121$ & $0.721\pm0.120$\\
        64 & 1.00 & 1.00 & $0.656\pm0.119$ & $0.724\pm0.119$\\
        128 & 1.00 & 1.00 & $0.656\pm0.121$ & $0.721\pm0.116$\\
        \bottomrule
    \end{tabular}}
    \end{minipage}
    \hfill
    \begin{minipage}{0.49\linewidth}
        \includegraphics[width=\linewidth, clip=true, trim=20pt 20pt 500pt 1075pt]{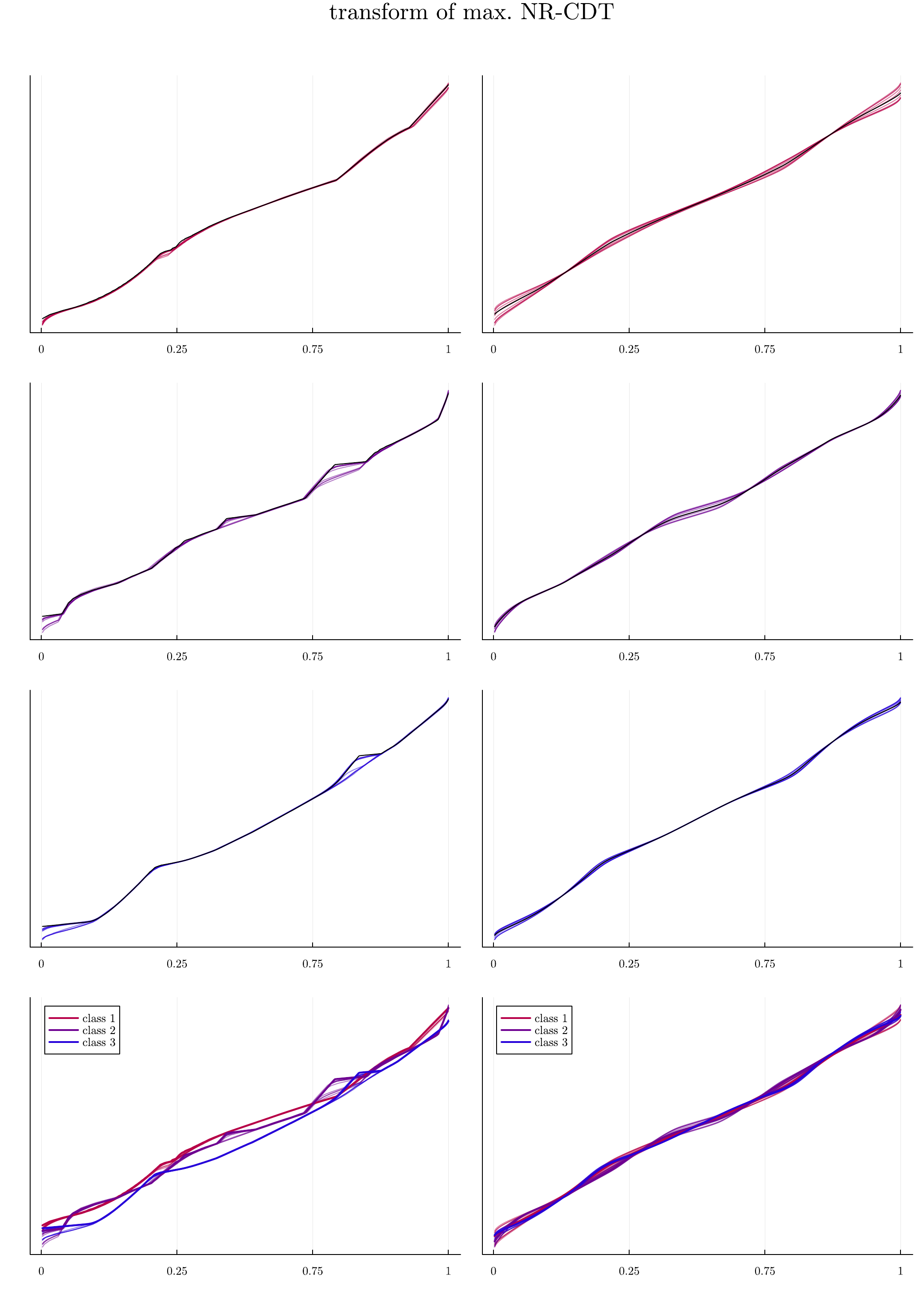}
        \captionof{figure}{Visualization of \mNRCDT{} 
        for the academic dataset
        and 128 angles in $[0,\pi)$.}
        \label{fig:nearest_neighbor}
    \end{minipage}
\end{figure}

\subsection{Nearest Neighbour Classification}

In the first experiment,
we aim to validate the theoretical result from Theorem~\ref{thm:sep-max-nrcdt}.
Looking at the proof,
we recall that
\mNRCDT{} maps each entire affine class
to a single point.
The easiest way for classification is the nearest neighbor method,
which can be immediately generalized to an arbitrary number of classes.
For the first dataset,
we use the template symbols as references 
and classify all class members based on
the nearest neighbour rule with respect to
the Chebychev and Euclidean norm,
cf.~Table~\ref{tab:nearest_neighbor} (columns 2 and 3)
for qualitative results.
For illustration, the \mNRCDT{} of all considered classes
are depicted in Figure~\ref{fig:nearest_neighbor}.
In theory, the classes should yield three curves.
However,
due to approximation errors,
we observe slight perturbations.
For the second dataset,
since we have no templates,
we iteratively select one instance per class as reference 
and classify the remaining class members again based on
the nearest neighbour rule.
Thereon, we compute the mean and standard deviation
of the achieved accuracy,
see~Table~\ref{tab:nearest_neighbor} (columns 4 and 5).
For the discretization of the \mNRCDT{},
we use $2$ to $128$ angles in $[0,\pi)$,
reported in column 1 of~Table~\ref{tab:nearest_neighbor}.
As expected,
due to Theorem~\ref{thm:sep-max-nrcdt},
the classification of the first dataset
is (nearly) perfect;
remarkable, already for a very small number of chosen angles.
For the LinMNIST dataset,
the achieved accuracy ranges from $55\%$ to $73\%$,
which is still significantly better than random guessing,
achieving an accuracy of $33\%$
as we deal with a three class problem.
Let us stress that perfect classification is not to be expected
since LinMNIST does not satisfy our theoretical assumptions.

\subsection{Support Vector Machine Classification}

\begin{table}[t]
    \caption{
    Classification accuracy (mean$\pm$std based on $10$-fold cross validation) 
    for the academic dataset.
    The first two classes in Fig.~\ref{fig:templates_syn} are used
    with different class sizes
    and numbers of equispaced angles in $[0,\pi)$.
    }
    \label{tab:max_comparison_syn}
    \resizebox{\linewidth}{!}{
    \begin{tabular}{l @{\quad} l @{\quad} l @{\quad} l @{\quad} l @{\quad} l @{\quad} l @{\quad} l @{\quad} l @{\quad} l @{\quad}}
    \toprule 
    class & Euclidean & \multicolumn{4}{l}{R-CDT} & \multicolumn{4}{l}{\mNRCDT{}} \\
    size & & 2 & 4 & 8 & 16 & 2 & 4 & 8 & 16 \\
    \midrule
    10 & $0.489\pm0.097$ & $0.644\pm0.112$ & $0.572\pm0.079$ & $0.567\pm0.086$ & $0.561\pm0.076$ & $0.872\pm0.114$ & $0.806\pm0.092$ & $0.944\pm0.082$ & $\mathbf{0.989\pm0.023}$\\
    30 & $0.520\pm0.092$ & $0.825\pm0.128$ & $0.728\pm0.089$ & $0.704\pm0.065$ & $0.704\pm0.080$ & $0.956\pm0.094$ & $0.979\pm0.051$ & $\mathbf{1.000\pm0.000}$ & $\mathbf{1.000\pm0.000}$  \\
    90 & $0.551\pm0.028$ & $0.962\pm0.036$ & $0.952\pm0.056$ & $0.974\pm0.043$ & $0.982\pm0.041$ & $0.996\pm0.008$ & $0.990\pm0.009$ & $\mathbf{1.000\pm0.000}$ & $\mathbf{1.000\pm0.000}$\\
    270 & $0.610\pm0.021$ & $0.997\pm0.006$ & $0.999\pm0.001$ & $\mathbf{1.000\pm0.000}$ & $\mathbf{1.000\pm0.000}$ & $\mathbf{1.000\pm0.000}$ & $\mathbf{1.000\pm0.000}$ & $\mathbf{1.000\pm0.000}$ & $\mathbf{1.000\pm0.000}$\\
    \bottomrule
    \end{tabular}}
    \vspace*{-15pt}
\end{table}

\begin{table}[t]
\caption{
Classification accuracy (mean$\pm$std based on $10$-fold cross validation)
for the LinMNIST dataset.
The first and last class in Fig.~\ref{fig:templates_mnist} are used
with different class sizes
and numbers of equispaced angles in $[0,\pi)$.
}
\label{tab:max_comparison_mnist}
\resizebox{\linewidth}{!}{%
\begin{tabular}{l @{\quad} l @{\quad} l @{\quad} l @{\quad} l @{\quad} l @{\quad} l @{\quad} l @{\quad} l @{\quad} l @{\quad}}
\toprule
class & Euclidean & \multicolumn{4}{l}{R-CDT} & \multicolumn{4}{l}{\mNRCDT{}} \\
size & & 4 & 8 & 16 & 32 & 4 & 8 & 16 & 32 \\
\midrule 
10 & $0.478\pm0.070$ & $0.611\pm0.047$ & $0.556\pm0.027$ & $0.556\pm0.035$ & $0.556\pm0.027$ & $0.794\pm0.064$ & $\mathbf{0.833\pm0.091}$ & $0.816\pm0.059$ & $0.822\pm0.063$ \\
20 & $0.528\pm0.057$ & $0.583\pm0.023$ & $0.583\pm0.037$ & $0.583\pm0.027$ & $0.583\pm0.026$ & $0.842\pm0.050$ & $\mathbf{0.883\pm0.039}$ & $0.877\pm0.035$ & $0.881\pm0.039$ \\
50 & $0.653\pm0.044$ & $0.756\pm0.037$ & $0.844\pm0.060$ & $0.878\pm0.072$ & $0.822\pm0.047$ & $0.890\pm0.044$ & $0.927\pm0.024$ & $0.932\pm0.025$ & $\mathbf{0.936\pm0.022}$ \\
250 & $0.898\pm0.024$ & $0.945\pm0.025$ & $0.949\pm0.016$ & $0.953\pm0.010$ & $0.957\pm0.012$ & $0.955\pm0.013$ & $0.962\pm0.008$ & $0.964\pm0.005$ & $\mathbf{0.966\pm0.005}$ \\
500 & $0.940\pm0.010$ & $0.948\pm0.010$ & $0.950\pm0.006$ & $0.949\pm0.005$ & $0.952\pm0.007$ & $0.950\pm0.005$ & $0.961\pm0.009$ & $0.963\pm0.005$ & $\mathbf{0.964\pm0.005}$ \\
1.000 & $0.959\pm0.005$ & $0.939\pm0.007$ & $0.945\pm0.008$ & $0.948\pm0.008$ & $0.949\pm0.009$ & $0.949\pm0.006$ & $0.961\pm0.007$ & $0.965\pm0.005$ & $\mathbf{0.966\pm0.006}$ \\
5.000 & $\mathbf{0.977\pm0.003}$ & $0.947\pm0.003$ & $0.956\pm0.002$ & $0.958\pm0.002$ & $0.962\pm0.002$ & $0.956\pm0.002$ & $0.969\pm0.001$ & $0.973\pm0.001$ & $0.974\pm0.004$ \\
\bottomrule
\end{tabular}}
\vspace*{-15pt}
\end{table}

In this second set of numerical experiments,
we compare three different ans\"atze
in combination with linear support vector machines (SVMs).
The na\"ive approach
uses the Euclidean representation of the images 
as basis for the SVM.
Inspired by~\cite{Kolouri2016},
the second approach makes use of
the plain R-CDT projections 
using a fixed number of angles.
Finally, 
the third approach utilizes
our \mNRCDT{} projections
over the same set of angles.
For all these methods,
a $10$-fold cross validation is performed.
This means that the dataset is partitioned into ten subsets,
of which one is successively used for training,
whereas the remaining nine are reserved for testing.
The results
for different class sizes
and numbers of angels
are summarized for the academic dataset in Table~\ref{tab:max_comparison_syn}
and for the Lin\-MNIST dataset in Table~\ref{tab:max_comparison_mnist}. 
We observe that our approach outperforms all others, 
especially in the small data regime and for few angles.
For large data sizes, 
all methods perform at nearly the same accuracy.

\section{Conclusion}
In this work, 
we proposed the novel 
max-normalized R-CDT 
for feature representation 
and proved linear separability of classes 
generated by affine transforms of given templates.
This was validated by numerical experiments
showing a significant increase in classification accuracy
over original R-CDT.
Potential future directions include
the control of perturbations
either in the templates
or in the transforms
as well as
a more in-depth numerical study
in various applications.

%
%
\bibliographystyle{splncs04}
\bibliography{literature}

\end{document}